\newcommand{\R}{\mathbb{R}}
\newcommand{\C}{\mathbb{C}}
\newcommand{\Z}{\mathbb{Z}}
\numberwithin{equation}{section}
\theoremstyle{plain}
\newtheorem{thm}[equation]{Theorem}
\newtheorem{lem}[equation]{Lemma}
\newtheorem{prop}[equation]{Proposition}
\newtheorem{cor}[equation]{Corollary}
\theoremstyle{definition}
\theoremstyle{remark}
\newtheorem{rem}[equation]{Remark}
\title{Representation of bi-parameter singular integrals by dyadic operators}
\author{Henri Martikainen}\thanks{The author is supported by the Academy of Finland through the project ``$L^p$ methods in harmonic analysis''.}
\address{Department of Mathematics and Statistics, University of Helsinki, P.O.B. 68, FI-00014 Helsinki, Finland}
\email{henri.martikainen@helsinki.fi}
\subjclass[2010]{42B20}
\keywords{Haar shift, bi-parameter singular integral, $T1$ theorem, non-homogeneous analysis}
\begin{document}
\maketitle

\begin{abstract}
We prove a dyadic representation theorem for bi-parameter singular integrals. That is, we represent certain bi-parameter operators
as rapidly decaying averages of what we call bi-parameter shifts. A new version of the product space $T1$ theorem is established as a consequence.
\end{abstract}

\section{Introduction}
We study certain bi-parameter singular integrals $T$ acting on some class of functions with product domain $\R^{n+m} = \R^n \times \R^m$. Our aim is to prove a representation theorem for them
as an average of bi-parameter shifts S:
\begin{displaymath}
\langle Tf, g\rangle = C_T \mathbb{E}_{w_n} \mathbb{E}_{w_m}\mathop{\sum_{(i_1, i_2) \in \Z_+^2}}_{(j_1, j_2) \in \Z_+^2} 2^{-\max(i_1,i_2)\delta/2}2^{-\max(j_1,j_2)\delta/2} \langle S^{i_1i_2j_1j_2}_{\mathcal{D}_n\mathcal{D}_m}f, g \rangle.
\end{displaymath}
Here the average is taken over all the dyadic grids $\mathcal{D}_n$ in $\R^n$ (parametrized by the random parameter $w_n$) and all the dyadic grids $\mathcal{D}_m$ in $\R^m$ (parametrized by the random parameter $w_m$).
An exact formulation of everything is given after the introduction.
Such a representation theorem exists for ordinary Calder\'on--Zygmund operators, and this was proven by Hyt\"onen \cite{Hy} in connection with the proof of the $A_2$ conjecture
for general singular integrals.

In the one-parameter case such general representation theorems have already been utilized several times after \cite{Hy}. The simplified proof of the $A_2$ conjecture by Hyt\"onen, P\'erez, Treil and Volberg \cite{HPTV}
offered among other things a bit easier formulation of the representation theorem. In \cite{HLMORSUT} the author together with Hyt\"onen, Lacey, Orponen, Reguera, Sawyer and Uriarte--Tuero used
the representation theorem to study sharp weak and strong type weighted bounds for maximal truncations $T_{\#}$.
Modifying the metric randomization by Hyt\"onen and the author \cite{HM} these representation theorems were lifted to the generality of metric spaces by Nazarov, Reznikov and Volberg \cite{NRV}. Several other applications in the weighted context
also already exist.

The reason why the representation theorem is so useful in the one-parameter case is that it can be used to reduce problems considering a general singular integral $T$ into purely dyadic problems considering shifts only. Because of this, there is no particular reason why the applications
should be limited to weighted questions. This just happens to be the case, since the representation theorem was originally developed for this purpose and is still very new a result. This is motivation enough for us to develop the analogous theory in the bi-parameter case. It would, of course, be interesting to study sharp weighted theory in the bi-parameter setting. Our theorem might be useful for this, however, it is a very difficult problem.

Regarding multi-parameter singular integrals, and multi-parameter harmonic analysis in general, there is a very large existing theory. After the classical $T1$ and $Tb$ type theory by David and Journ\'e \cite{DJ} and
David, Journ\'e and Semmes \cite{DJS}, the first $T1$ type theorem for product spaces was proved by Journ\'e \cite{Jo}. Regarding other classical theory, we only mention the work of
Chang and Fefferman \cite{CF}, Fefferman \cite{Fe} and Fefferman and Stein \cite{FS}. These three concern singular integrals and various spaces, like the BMO, on the product setting.
There is a wide body of more recent developments of which we here only mention the papers by Ferguson and Lacey \cite{FL}, Lacey and Metcalfe \cite{LM} and
Muscalu, Pipher, Tao and Thiele \cite{MPTT}. These have to do with various multi-parameter paraproducts and characterizations for some product spaces. Some bi-parameter paraproducts appear also in our proof, and the
product BMO space is thus important for us.

The classical multi-parameter singular integral theory of Journ\'e \cite{Jo} involves formulations written in the language of vector-valued Calder\'on--Zygmund theory. Very recently Pott and Villarroya \cite{PV} formulated and proved
a new type of $T1$ theorem for product spaces. There such vector-valued formulations are replaced by several new mixed type conditions. Here we define our bi-parameter operators inspired by \cite{PV}. The conditions we use are not
exactly the same. We, for example, do not work with smooth testing conditions.
Establishing the correct shift structure is our primary task. However, we do get, as a by product, a pretty nice form of the product space $T1$ theorem.

In this paper we bring the superbly useful machinery of non-homogeneous analysis pioneered by Nazarov, Treil and Volberg (see for example \cite{NTV}) to the context of bi-parameter theory. The use of non-homogeneous analysis
gives additional decay for certain matrix elements involved in the expansion of $\langle Tf, g\rangle$. Just like in Hyt\"onen's proof of the representation theorem for one-parameter singular integrals, the proof is a $T1$ style proof with ingredients
from non-homogeneous analysis. In our case, we have to deal with the much added complexity of the bi-parameter situation.
Indeed, there are more cases than in the one-parameter setting, and many of these are interesting mixed type phenomena. The non-homogeneous analysis makes this splitting into cases nicely transparent getting rid of rare
geometric complications.

\subsection*{Acknowledgements}
The author wishes to thank Tuomas Hyt\"onen for suggesting this topic and for useful discussions.

\section{Definitions, strategy and the main result}
\subsection*{Structural assumptions}
Let us formulate the Calder\'on--Zygmund structure of our operators.
The basic assumption is that if $f = f_1 \otimes f_2$ (meaning $f(x) = f_1(x_1)f_2(x_2)$ for $x=(x_1,x_2)$) and $g= g_1 \otimes g_2$
with $f_1, g_1 \colon \R^n \to \C$, $f_2, g_2 \colon \R^m \to \C$, $\textrm{spt}\,f_1\cap \textrm{spt}\,g_1 = \emptyset$ and $ \textrm{spt}\,f_2 \cap  \textrm{spt}\,g_2 = \emptyset$, then
we have the kernel representation
\begin{displaymath}
\langle Tf, g\rangle = \int_{\R^{n+m}}\int_{\R^{n+m}} K(x,y)f(y)g(x)\,dx\,dy.
\end{displaymath}
The kernel  $K\colon (\R^{n+m} \times \R^{n+m}) \setminus \{(x,y) \in \R^{n+m}  \times \R^{n+m}:\, x_1 = y_1 \textrm{ or } x_2 = y_2\} \to \C$ is assumed to satisfy the size condition
\begin{displaymath}
|K(x,y)| \le C\frac{1}{|x_1-y_1|^{n}}\frac{1}{|x_2-y_2|^{m}}
\end{displaymath}
and the H\"older conditions
\begin{align*}
|K(x,y) - K(x, &(y_1, y_2')) - K(x, (y_1', y_2)) + K(x, y')| \\
&\le  C \frac{|y_1-y_1'|^{\delta}}{|x_1-y_1|^{n+\delta}}  \frac{|y_2-y_2'|^{\delta}}{|x_2-y_2|^{m+\delta}} 
\end{align*}
whenever $|y_1-y_1'| \le |x_1-y_1|/2$ and $|y_2-y_2'| \le |x_2-y_2|/2$,
\begin{align*}
|K(x,y) - K((x_1, x_2'), &y) - K((x_1', x_2), y) + K(x', y)| \\
&\le C\frac{|x_1-x_1'|^{\delta}}{|x_1-y_1|^{n+\delta}}  \frac{|x_2-x_2'|^{\delta}}{|x_2-y_2|^{m+\delta}} 
\end{align*}
whenever $|x_1 - x_1'| \le |x_1-y_1|/2$ and $|x_2-x_2'| \le |x_2-y_2|/2$,
\begin{align*}
|K(x,y) - K((x_1,x_2'), &y) - K(x, (y_1', y_2)) + K((x_1,x_2'), (y_1', y_2))| \\
&\le  C \frac{|y_1-y_1'|^{\delta}}{|x_1-y_1|^{n+\delta}}  \frac{|x_2-x_2'|^{\delta}}{|x_2-y_2|^{m+\delta}} 
\end{align*}
whenever $|y_1-y_1'| \le |x_1-y_1|/2$ and $|x_2-x_2'| \le |x_2-y_2|/2$, and
\begin{align*}
|K(x,y) - K(x, &(y_1,y_2')) - K((x_1', x_2), y) + K((x_1',x_2), (y_1, y_2'))| \\
&\le  C \frac{|x_1-x_1'|^{\delta}}{|x_1-y_1|^{n+\delta}}  \frac{|y_2-y_2'|^{\delta}}{|x_2-y_2|^{m+\delta}} 
\end{align*}
whenever $|x_1 - x_1'| \le |x_1-y_1|/2$ and $|y_2-y_2'| \le |x_2-y_2|/2$.

Furthermore, we assume the mixed H\"older and size conditions
\begin{align*}
|K(x,y) - K((x_1', x_2), y)| \le C\frac{|x_1-x_1'|^{\delta}}{|x_1-y_1|^{n+\delta}} \frac{1}{|x_2-y_2|^m}
\end{align*}
whenever $|x_1-x_1'| \le |x_1-y_1|/2$,
\begin{align*}
|K(x,y) - K(x, (y_1', y_2))| \le C\frac{|y_1-y_1'|^{\delta}}{|x_1-y_1|^{n+\delta}} \frac{1}{|x_2-y_2|^m}
\end{align*}
whenever $|y_1-y_1'| \le |x_1-y_1|/2$,
\begin{align*}
|K(x,y) - K((x_1, x_2'), y)| \le C \frac{1}{|x_1-y_1|^n}\frac{|x_2-x_2'|^{\delta}}{|x_2-y_2|^{m+\delta}}
\end{align*}
whenever $|x_2-x_2'| \le |x_2-y_2|/2$, and
\begin{align*}
|K(x,y) - K(x, (y_1,y_2'))| \le C \frac{1}{|x_1-y_1|^n}\frac{|y_2-y_2'|^{\delta}}{|x_2-y_2|^{m+\delta}}
\end{align*}
whenever $|y_2-y_2'| \le |x_2-y_2|/2$. We use, for minor convenience, $\ell^{\infty}$ metrics on $\R^n$ and $\R^m$.

We also need some Calder\'on--Zygmund structure on $\R^n$ and $\R^m$ separately. If $f = f_1 \otimes f_2$ and $g = g_1 \otimes g_2$ with $\textrm{spt}\,f_1\cap \textrm{spt}\,g_1 = \emptyset$, then we assume the kernel representation
\begin{displaymath}
\langle Tf, g\rangle = \int_{\R^n} \int_{\R^n} K_{f_2, g_2}(x_1,y_1)f_1(y_1)g_1(x_1)\,dx_1\,dy_1.
\end{displaymath}
The kernel $K_{f_2, g_2} \colon (\R^n \times \R^n) \setminus \{(x_1, y_1) \in \R^n \times \R^n:\, x_1 = y_1\}$ is assumed to satisfy the size condition
\begin{displaymath}
|K_{f_2,g_2}(x_1,y_1)| \le C(f_2,g_2)\frac{1}{|x_1-y_1|^{n}}
\end{displaymath}
and the H\"older conditions
\begin{displaymath}
|K_{f_2,g_2}(x_1,y_1) - K_{f_2,g_2}(x_1',y_1)| \le C(f_2,g_2)\frac{|x_1-x_1'|^{\delta}}{|x_1-y_1|^{n+\delta}}
\end{displaymath}
whenever $|x_1 - x_1'| \le |x_1-y_1|/2$, and
\begin{displaymath}
|K_{f_2,g_2}(x_1,y_1) - K_{f_2,g_2}(x_1,y_1')| \le C(f_2,g_2)\frac{|y_1-y_1'|^{\delta}}{|x_1-y_1|^{n+\delta}}
\end{displaymath}
whenever $|y_1 - y_1'| \le |x_1-y_1|/2$. Let $|A|$ denote the Lebesgue measure of a set $A$ and $\chi_A$ be the characteristic function of $A$. We need the above representations and some control for $C(f_2,g_2)$ only in the diagonal in the following sense. 
For every cube $V \subset \R^m$ we assume that there holds $C(\chi_V, \chi_V) + C(\chi_V, u_V) + C(u_V, \chi_V) \le C|V|$, whenever $u_V$ is such a function that spt$\,u_V \subset V$, $|u_V| \le 1$ and $\int u_V = 0$. Functions $u_V$ are called $V$-adapted with zero-mean (so
$V$-adapted means just the first two conditions on the support and size). 
We also assume the analogous representation and properties with a kernel $K_{f_1, g_1}$ in the case spt$\,f_2 \cap \textrm{spt}\, g_2 = \emptyset$.

\subsection*{Boundedness and cancellation assumptions}
Define the partial adjoint $T_1$ of $T$ by setting
\begin{displaymath}
\langle T_1(f_1 \otimes f_2), g_1 \otimes g_2 \rangle = \langle T(g_1 \otimes f_2), f_1 \otimes g_2\rangle.
\end{displaymath}
We assume that $T1, T^*1, T_1(1)$ and $T_1^*(1)$ belong to the product BMO on $\R^n \times \R^m$. We recall the definition of this space later in this section.

We assume that $|\langle T(\chi_K \otimes \chi_V), \chi_K \otimes \chi_V\rangle| \le C|K||V|$ for every cube $K \subset \R^n$ and $V \subset \R^m$. This is the weak boundedness property for $T$.

We also assume the following diagonal BMO conditions: for every cube $K \subset \R^n$ and $V \subset \R^m$ and for every zero-mean functions $a_K$ and $b_V$ which are $K$ and $V$ adapted respectively
(one has spt$\, a_K \subset K$, $|a_K| \le 1$ and $\int a_K = 0$, and similarly for $b_V$):
\begin{itemize}
\item[(i)] $|\langle T(a_K \otimes \chi_V), \chi_K \otimes \chi_V\rangle| \le C|K||V|$,
\item[(ii)] $|\langle T(\chi_K \otimes \chi_V), a_K \otimes \chi_V\rangle| \le C|K||V|$,
\item[(iii)] $|\langle T(\chi_K \otimes b_V), \chi_K \otimes \chi_V\rangle| \le C|K||V|$,
\item[(iv)] $|\langle T(\chi_K \otimes \chi_V), \chi_K \otimes b_V\rangle| \le C|K||V|$.
\end{itemize}

\subsection*{Haar functions} Let $h_I$ be a $L^2$ normalized Haar function related to $I \in \mathcal{D}_n$, where $\mathcal{D}_n$ is a dyadic grid on $\R^n$.
With this we mean that $h_I$, $I = I_1 \times \cdots \times I_n$, is one of the $2^n$ functions $h_I^{\eta}$, $\eta = (\eta_1, \ldots, \eta_n) \in \{0,1\}^n$, defined by
\begin{displaymath}
h_I^{\eta} = h_{I_1}^{\eta_1} \otimes \cdots \otimes h_{I_n}^{\eta_n}, 
\end{displaymath}
where $h_{I_i}^0 = |I_i|^{-1/2}\chi_{I_i}$ and $h_{I_i}^1 = |I_i|^{-1/2}(\chi_{I_{i, l}} - \chi_{I_{i, r}})$ for every $i = 1, \ldots, n$. Here $I_{i,l}$ and $I_{i,r}$ are the left and right
halves of the interval $I_i$ respectively. If $\eta \ne 0$ the Haar function is cancellative: $\int h_I = 0$. All the cancellative Haar functions form an orthonormal basis of $L^2(\R^n)$.
If $a \in L^2(\R^n)$ we may thus write $a = \sum_{I \in \mathcal{D}_n} \sum_{\eta \in \{0,1\}^n \setminus \{0\}} \langle a, h_I^{\eta} \rangle h_I^{\eta}$. However, we suppress the finite $\eta$ summation
and just write $a = \sum_I \langle a, h_I\rangle h_I$. Given a dyadic grid $\mathcal{D}_m$ on $\R^m$ and a cube $J \in \mathcal{D}_m$, we denote a $L^2$ normalized Haar function on $J$ by $u_J$.

\subsection*{Product BMO on $\R^n \times \R^m$}
Let us be given a dyadic grid $\mathcal{D}_n$ in $\R^n$ and a dyadic grid $\mathcal{D}_m$ in $\R^m$. We define the square function
\begin{displaymath}
S_{\mathcal{D}_n\mathcal{D}_m}f = \Big[ \sum_{K \in \mathcal{D}_n} \sum_{V \in \mathcal{D}_m} |\langle f, h_K \otimes u_V \rangle|^2 \frac{\chi_K \otimes \chi_V}{|K||V|} \Big]^{1/2}.
\end{displaymath}
Then the product Hardy space $H^1_{\mathcal{D}_n\mathcal{D}_m}(\R^n \times \R^m)$ consists of the locally integrable functions $f$ with $\|f\|_{H^1_{\mathcal{D}_n\mathcal{D}_m}(\R^n \times \R^m)} = \|S_{\mathcal{D}_n\mathcal{D}_m}f\|_1 < \infty$. The dual of this space
is the product BMO space BMO$_{\mathcal{D}_n\mathcal{D}_m}(\R^n \times \R^m)$.

For us, the condition that $b \in \{T1, T^*1, T_1(1), T_1^*(1)\}$ is in the product BMO is defined to mean that $\|b\|_{\textup{BMO}_{\mathcal{D}_n\mathcal{D}_m}(\R^n \times \R^m)} \le C$ with every dyadic grid $\mathcal{D}_n$ in $\R^n$ and every dyadic grid $\mathcal{D}_m$ in $\R^m$. 

\subsection*{Bi-parameter shifts}
A bi-parameter shift on $\R^n \times \R^m$ is tied to a dyadic grid $\mathcal{D}_n$ on $\R^n$, a dyadic grid $\mathcal{D}_m$ on $\R^m$ and non-negative integers $i_1, i_2, j_1, j_2$.
Such an operator is denoted by $S^{i_1i_2j_1j_2}_{\mathcal{D}_n\mathcal{D}_m}$ and is of the form
\begin{displaymath}
S^{i_1i_2j_1j_2}_{\mathcal{D}_n\mathcal{D}_m}f = \sum_{K \in \mathcal{D}_n} \sum_{V \in \mathcal{D}_m} A^{i_1i_2j_1j_2}_{KV}f,
\end{displaymath}
where
\begin{displaymath}
A^{i_1i_2j_1j_2}_{KV}f = \mathop{\mathop{\sum_{I_1,\,I_2 \subset K}}_{\ell(I_1) = 2^{-i_1}\ell(K)}}_{\ell(I_2) = 2^{-i_2}\ell(K)}
\mathop{\mathop{\sum_{J_1, \,J_2 \subset V}}_{\ell(J_1) = 2^{-j_1}\ell(V)}}_{\ell(J_2) = 2^{-j_2}\ell(V)}
 a_{I_1I_2KJ_1J_2V} \langle f, h_{I_1} \otimes u_{J_1} \rangle h_{I_2} \otimes u_{J_2}
\end{displaymath}
with
\begin{displaymath}
| a_{I_1I_2KJ_1J_2V}| \le \frac{|I_1|^{1/2}|I_2|^{1/2}}{|K|}\frac{|J_1|^{1/2}|J_2|^{1/2}}{|V|}.
\end{displaymath}
Here, of course, $I_1, I_2 \in \mathcal{D}_n$ and $J_1, J_2 \in \mathcal{D}_m$, and $\ell(I)$ denotes the side length of a cube $I$. It is also required that all the subshifts
\begin{displaymath}
S^{i_1i_2j_1j_2}_{\mathcal{A}\mathcal{B}} = \sum_{K \in \mathcal{A}} \sum_{V \in \mathcal{B}} A^{i_1i_2j_1j_2}_{KV}f, \qquad \mathcal{A} \subset \mathcal{D}_n, \, \mathcal{B} \subset \mathcal{D}_m,
\end{displaymath}
map $L^2(\R^n \times \R^m) \to L^2(\R^n \times \R^m)$ with norm at most one. If all of the Haar functions $h_{I_1}, h_{I_2}, u_{J_1}, u_{J_2}$ appearing are cancellative, the shift is called cancellative. Otherwise, it is called non-cancellative. The last requirement
concerning the $L^2$ boundedness of all of the subshifts follows from the other conditions for cancellative shifts.

In practice, it is useful to observe that a bi-parameter shift $S$ of type $(i_1, i_2, j_1, j_2)$ related to some dyadic grids is simply of the form
\begin{align*}
Sf(x) = \sum_{K,V} A_{KV}f(x) &= \sum_{K,V} \frac{1}{|K \times V|} \int_{K \times V} K_{A_{KV}}(x,y)f(y)\,dy \\ &= \int_{\R^{n+m}} K_S(x,y)f(y)\,dy,
\end{align*}
where first of all spt$\, K_{A_{KV}} \subset (K \times V) \times (K \times V)$ and $|K_{A_{KV}}(x,y)| \le 1$. Moreover, $K_{A_{KV}}$ is constant with respect to $x$
on dyadic rectangles $I \times J \subset K \times V$ for which $\ell(I) < 2^{-i_2}\ell(K)$ and $\ell(J) < 2^{-j_2}\ell(V)$, and $K_{A_{KV}}$ is constant with respect to $y$
on dyadic rectangles $I \times J \subset K \times V$ for which $\ell(I) < 2^{-i_1}\ell(K)$ and $\ell(J) < 2^{-j_1}\ell(V)$. Note also that clearly
\begin{displaymath}
|K_S(x,y)| \le C\frac{1}{|x_1-y_1|^n} \frac{1}{|x_2-y_2|^m}.
\end{displaymath}

\subsection{Random dyadic grids and the basic averaging formula}
Let $w_n = (w_n^i)_{i \in \Z}$ and $w_m =  (w_m^j)_{j \in \Z}$, where $w_n^i \in \{0,1\}^n$ and $w_m^j \in \{0,1\}^m$. Let $\mathcal{D}^0_n$ and $\mathcal{D}^0_m$ be the standard dyadic grids on $\R^n$ and $\R^m$ respectively.
In $\R^n$ we define the new dyadic grid $\mathcal{D}_n = \{I + \sum_{i:\, 2^{-i} < \ell(I)} 2^{-i}w_n^i: \, I \in \mathcal{D}_n^0\} = \{I + w_n: \, I \in \mathcal{D}_n^0\}$, where we simply have defined
$I + w_n := I + \sum_{i:\, 2^{-i} < \ell(I)} 2^{-i}w_n^i$. The dyadic grid $\mathcal{D}_m$ in $\R^m$ is similarly defined. There is a natural product probability structure on $(\{0,1\}^n)^{\Z}$ and $(\{0,1\}^m)^{\Z}$. So we have independent
random dyadic grids $\mathcal{D}_n$ and $\mathcal{D}_m$ in $\R^n$ and $\R^m$ respectively. Even if $n=m$ we need two independent grids.

A cube $I \in \mathcal{D}_n$ is called bad if there exists $\tilde I \in \mathcal{D}_n$ so that $\ell(\tilde I) \ge 2^r \ell(I)$ and $d(I, \partial \tilde I) \le 2\ell(I)^{\gamma_n}\ell(\tilde I)^{1-\gamma_n}$.
Here $\gamma_n = \delta/(2n + 2\delta)$, where $\delta > 0$ appears in the kernel estimates. One notes that
$\pi_{\textrm{good}}^n := \mathbb{P}_{w_n}(I + w_n \textrm{ is good})$ is independent of $I \in \mathcal{D}^0_n$. The parameter $r$ is a fixed constant so that $\pi_{\textrm{good}}^n, \pi_{\textrm{good}}^m > 0$.
Furthermore, it is important to note that for a fixed $I \in \mathcal{D}^0_n$
the set $I + w_n$ depends on $w_n^i$ with $2^{-i} < \ell(I)$, while the goodness of $I + w_n$ depends on $w_n^i$ with $2^{-i} \ge \ell(I)$. In particular, these notions are independent.
Analogous definitions and remarks related to $\mathcal{D}_m$ hold.

We prove the basic averaging formula of Hyt\"onen \cite{Hy} but in the bi-parameter setting. This is the only part of the proof where probabilistic arguments are needed, and here independence plays a big role, even more so in the bi-parameter setting.
We note that the functions $f$ and $g$ in this paper are always taken from some particularly nice dense subset of functions.
\begin{prop}
There holds
\begin{align*}
\langle Tf, g\rangle = C\mathbb{E} \sum_{I_1, I_2 \in \mathcal{D}_n} \sum_{J_1, J_2 \in \mathcal{D}_m}
&\chi_{\textrm{good}}(\textrm{smaller}(I_1,I_2)) \chi_{\textrm{good}}(\textrm{smaller}(J_1,J_2)) \\
& \langle T(h_{I_1} \otimes u_{J_1}), h_{I_2} \otimes u_{J_2}\rangle \langle f, h_{I_1} \otimes u_{J_1}\rangle \langle g, h_{I_2} \otimes u_{J_2}\rangle,
\end{align*}
where $\mathbb{E} = \mathbb{E}_{w_n} \mathbb{E}_{w_m}$ and $C = 1/(\pi_{\textrm{good}}^n \pi_{\textrm{good}}^m)$.
\end{prop}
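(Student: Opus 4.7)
The plan is to follow Hytönen's one-parameter averaging argument \cite{Hy} twice, once in each parameter, exploiting the independence of $w_n$ and $w_m$ to decouple the two halves. First, since $f$ and $g$ lie in a sufficiently nice dense subclass, for each realization of the grids $\mathcal{D}_n,\mathcal{D}_m$ one has the $L^2$-convergent bi-parameter Haar expansion, so that the pairing $\langle Tf,g\rangle$ equals the fourfold sum
\begin{displaymath}
\sum_{I_1,I_2\in\mathcal{D}_n}\sum_{J_1,J_2\in\mathcal{D}_m}\langle T(h_{I_1}\otimes u_{J_1}),h_{I_2}\otimes u_{J_2}\rangle \langle f,h_{I_1}\otimes u_{J_1}\rangle \langle g,h_{I_2}\otimes u_{J_2}\rangle.
\end{displaymath}
Since the left-hand side is deterministic, I may apply the iterated expectation $\mathbb{E}=\mathbb{E}_{w_n}\mathbb{E}_{w_m}$ to the right-hand side without changing the value.

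The heart of the argument is an \emph{insertion of goodness} identity of the form
\begin{displaymath}
\mathbb{E}_{w_n}\bigl[\chi_{\textrm{good}}(\textrm{smaller}(I_1^0+w_n,I_2^0+w_n))\,\Phi(w_n)\bigr]=\pi_{\textrm{good}}^n\,\mathbb{E}_{w_n}\Phi(w_n),
\end{displaymath}
valid whenever $\Phi$ depends on $w_n$ only through the positions of $I_1^0+w_n$ and $I_2^0+w_n$. Here one uses the structural fact recorded in the excerpt: for $I\in\mathcal{D}_n^0$, the translate $I+w_n$ depends only on $w_n^i$ with $2^{-i}<\ell(I)$, while the event that $I+w_n$ is good depends only on $w_n^i$ with $2^{-i}\ge\ell(I)$. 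Reparametrizing the outer sum by $I_1^0,I_2^0\in\mathcal{D}_n^0$ and conditioning on the coordinates of $w_n$ that determine the larger of the two positions, one sees that goodness of the smaller cube is decided by coordinates still free after the conditioning, so the conditional probability equals $\pi_{\textrm{good}}^n$. This is Hytönen's one-parameter independence argument verbatim. A symmetric statement holds in the second parameter with $w_m$ and $\pi_{\textrm{good}}^m$.

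Granting the two insertion identities, the proof concludes quickly: multiply the Haar pairing from the first step by $1=(\pi_{\textrm{good}}^n\pi_{\textrm{good}}^m)/(\pi_{\textrm{good}}^n\pi_{\textrm{good}}^m)$, and insert the two $\chi_{\textrm{good}}$ factors term by term. The independence of the $\sigma$-algebras generated by $w_n$ and $w_m$ lets $\mathbb{E}_{w_n}\mathbb{E}_{w_m}$ factor, so the $w_n$-insertion and the $w_m$-insertion can be performed on separate coordinates; pulling $1/(\pi_{\textrm{good}}^n\pi_{\textrm{good}}^m)$ out yields precisely the stated constant $C$. The only genuinely delicate point is the insertion identity itself: one must carefully distinguish which $w_n^i$'s (and likewise $w_m^j$'s) enter the Haar data and which enter goodness, and verify that after the correct conditioning the two are independent. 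Once this bookkeeping is settled, the bi-parameter nature of the statement is handled transparently by the product structure of the random parameters.
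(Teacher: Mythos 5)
There is a genuine gap: the ``insertion of goodness'' identity at the heart of your argument is false as stated, and without it the rest of the proposal collapses. You claim that
\begin{displaymath}
\mathbb{E}_{w_n}\bigl[\chi_{\textrm{good}}(\textrm{smaller}(I_1^0+w_n,I_2^0+w_n))\,\Phi(w_n)\bigr]=\pi_{\textrm{good}}^n\,\mathbb{E}_{w_n}\Phi(w_n)
\end{displaymath}
holds whenever $\Phi$ depends on $w_n$ only through the positions of $I_1^0+w_n$ and $I_2^0+w_n$, and you justify this by ``conditioning on the coordinates that determine the larger of the two positions.'' But take $\ell(I_1^0)<\ell(I_2^0)$, so the smaller cube is $I_1^0+w_n$. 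The position of the larger cube $I_2^0+w_n$ depends on $w_n^i$ with $2^{-i}<\ell(I_2^0)$, while the goodness of $I_1^0+w_n$ depends on $w_n^i$ with $2^{-i}\ge\ell(I_1^0)$. These two index sets overlap precisely for $\ell(I_1^0)\le 2^{-i}<\ell(I_2^0)$, which is nonempty. So after conditioning on the larger cube's position you have already fixed some of the coordinates governing the smaller cube's goodness; the conditional probability of goodness is \emph{not} $\pi_{\textrm{good}}^n$, and the term-by-term insertion you propose fails. Once both Haar expansions are in place, the matrix element $\langle T(h_{I_1}\otimes u_{J_1}),h_{I_2}\otimes u_{J_2}\rangle\langle f,\cdot\rangle\langle g,\cdot\rangle$ simply is not independent of $\chi_{\textrm{good}}(\textrm{smaller}(I_1,I_2))$.

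The paper avoids this by inserting the goodness factor \emph{before} the second Haar expansion: expanding only $f$ as $f=\sum_{I_1}h_{I_1}\otimes\langle f,h_{I_1}\rangle_1$, the quantity $\langle T(h_{I_1+w_n}\otimes\langle f,h_{I_1+w_n}\rangle_1),g\rangle$ depends on $w_n$ only through the position of $I_1+w_n$ (coordinates $2^{-i}<\ell(I_1)$), which genuinely is independent of the goodness of $I_1+w_n$ (coordinates $2^{-i}\ge\ell(I_1)$). So $\chi_{\textrm{good}}(I_1+w_n)/\pi_{\textrm{good}}^n$ can be inserted at that stage. Only then is $g$ expanded; the resulting double sum carries goodness on $I_1$, not on the smaller cube. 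To repair this the paper splits into $\ell(I_1)\le\ell(I_2)$ and $\ell(I_1)>\ell(I_2)$, uses independence to \emph{remove} the now-misplaced goodness in the second range, compares against the trivial (goodness-free) representation, and also runs the symmetric argument starting from $g$; combining the two pieces produces exactly $\chi_{\textrm{good}}(\textrm{smaller}(I_1,I_2))$. Your outline omits this split-and-compare mechanism entirely, which is where the real content of the proposition lies. The same issue then recurs when you iterate in the $\R^m$ variable. In short, the equality in the proposition is an identity of \emph{averaged sums}, obtained by cancellation between the two halves of the split, not a term-by-term independence statement.
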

\begin{rem}
Here all the appearing Haar functions are, of course, cancellative
and we recall that the finite summations over the $2^n-1$ or $2^m-1$ different cancellative Haar functions per cube are simply suppressed from the notation.
\end{rem}
\begin{proof}
Define $\langle f, h_I\rangle_1(y) = \int f(x,y)h_I(x)\,dx$, $y \in \R^m$. We may write
\begin{displaymath}
f = \sum_{I_1 \in \mathcal{D}_n} h_{I_1} \otimes \langle f, h_{I_1}\rangle_1 =  \sum_{I_1 \in \mathcal{D}_n^0} h_{I_1 + w_n} \otimes \langle f, h_{I_1 + w_n}\rangle_1
\end{displaymath}
so that by independence
\begin{align*}
\langle Tf, g\rangle &= E_{w_n} \sum_{I_1 \in \mathcal{D}_n^0} \langle T(h_{I_1 + w_n} \otimes \langle f, h_{I_1 + w_n}\rangle_1), g \rangle \\
&= \frac{1}{\pi_{\textrm{good}}^n}E_{w_n}   \sum_{I_1 \in \mathcal{D}_n^0} \chi_{\textrm{good}}(I_1 + w_n) \langle T(h_{I_1 + w_n} \otimes \langle f, h_{I_1 + w_n}\rangle_1), g \rangle.
\end{align*}
After expanding $g$ similarly as $f$ above, one sees that this equals
\begin{align*}
&\frac{1}{\pi_{\textrm{good}}^n}E_{w_n}   \sum_{I_1, I_2 \in \mathcal{D}_n^0} \chi_{\textrm{good}}(I_1 + w_n) \langle T(h_{I_1 + w_n} \otimes \langle f, h_{I_1 + w_n}\rangle_1), h_{I_2 + w_n} \otimes  \langle g, h_{I_2 + w_n}\rangle_1 \rangle \\
&= \frac{1}{\pi_{\textrm{good}}^n}E_{w_n}  \mathop{\sum_{I_1, I_2 \in \mathcal{D}_n^0}}_{\ell(I_1) \le \ell(I_2)} \chi_{\textrm{good}}(I_1 + w_n) \langle T(h_{I_1 + w_n} \otimes \langle f, h_{I_1 + w_n}\rangle_1), h_{I_2 + w_n} \otimes  \langle g, h_{I_2 + w_n}\rangle_1 \rangle \\
&+ E_{w_n}  \mathop{\sum_{I_1, I_2 \in \mathcal{D}_n^0}}_{\ell(I_1) > \ell(I_2)} \langle T(h_{I_1 + w_n} \otimes \langle f, h_{I_1 + w_n}\rangle_1), h_{I_2 + w_n} \otimes  \langle g, h_{I_2 + w_n}\rangle_1 \rangle.
\end{align*}
Here we again used independence in the latter summation. Comparing to the trivial representation
\begin{displaymath}
\langle Tf, g\rangle = E_{w_n}   \sum_{I_1, I_2 \in \mathcal{D}_n^0} \langle T(h_{I_1 + w_n} \otimes \langle f, h_{I_1 + w_n}\rangle_1), h_{I_2 + w_n} \otimes  \langle g, h_{I_2 + w_n}\rangle_1 \rangle 
\end{displaymath}
we conclude that
\begin{align*}
&\pi_{\textrm{good}}^n E_{w_n}  \mathop{\sum_{I_1, I_2 \in \mathcal{D}_n^0}}_{\ell(I_1) \le \ell(I_2)} \langle T(h_{I_1 + w_n} \otimes \langle f, h_{I_1 + w_n}\rangle_1), h_{I_2 + w_n} \otimes  \langle g, h_{I_2 + w_n}\rangle_1 \rangle  \\
&= E_{w_n}  \mathop{\sum_{I_1, I_2 \in \mathcal{D}_n^0}}_{\ell(I_1) \le \ell(I_2)} \chi_{\textrm{good}}(I_1 + w_n) \langle T(h_{I_1 + w_n} \otimes \langle f, h_{I_1 + w_n}\rangle_1), h_{I_2 + w_n} \otimes  \langle g, h_{I_2 + w_n}\rangle_1 \rangle.
\end{align*}
First expanding $g$ and proceeding like above one gets the symmetric formula
\begin{align*}
&\pi_{\textrm{good}}^nE_{w_n}  \mathop{\sum_{I_1, I_2 \in \mathcal{D}_n^0}}_{\ell(I_2) < \ell(I_1)} \langle T(h_{I_1 + w_n} \otimes \langle f, h_{I_1 + w_n}\rangle_1), h_{I_2 + w_n} \otimes  \langle g, h_{I_2 + w_n}\rangle_1 \rangle  \\
&= E_{w_n}  \mathop{\sum_{I_1, I_2 \in \mathcal{D}_n^0}}_{\ell(I_2) < \ell(I_1)} \chi_{\textrm{good}}(I_2 + w_n) \langle T(h_{I_1 + w_n} \otimes \langle f, h_{I_1 + w_n}\rangle_1), h_{I_2 + w_n} \otimes  \langle g, h_{I_2 + w_n}\rangle_1 \rangle.
\end{align*}
Splitting the trivial representation in to these two parts allows us to conclude that
\begin{displaymath}
\langle Tf, g\rangle = \frac{1}{\pi_{\textrm{good}}^n}E_{w_n}  \sum_{I_1, I_2 \in \mathcal{D}_n} \chi_{\textrm{good}}(\textrm{smaller}(I_1,I_2))  \langle T(h_{I_1} \otimes \langle f, h_{I_1}\rangle_1), h_{I_2} \otimes  \langle g, h_{I_2}\rangle_1 \rangle.
\end{displaymath}

We now expand on $\R^m$. One may write
\begin{displaymath}
\langle f, h_{I_1}\rangle_1 = \sum_{J_1 \in \mathcal{D}_m} \langle f, h_{I_1} \otimes u_{J_1}\rangle u_{J_1}
\end{displaymath}
so that
\begin{displaymath}
h_{I_1} \otimes \langle f, h_{I_1}\rangle_1 = \sum_{J_1 \in \mathcal{D}_m} \langle f, h_{I_1} \otimes u_{J_1}\rangle h_{I_1} \otimes u_{J_1}.
\end{displaymath}
We may then follow the recipe from above: insert this to the above formula for $\langle Tf, g\rangle$, add goodness to $J_1$ by independence, expand $h_{I_2} \otimes  \langle g, h_{I_2}\rangle_1$, split the summation to $\ell(J_1) \le \ell(J_2)$ and
$\ell(J_1) > \ell(J_2)$, remove the goodness from $J_1$ in the latter summation by independence and, finally, compare to the appropriate trivial identity. One also does the symmetric thing, where one first expands $h_{I_2} \otimes  \langle g, h_{I_2}\rangle_1$
and adds the goodness to $J_2$. Combining these gives the claim of the proposition.
\end{proof}
\begin{rem}
One may also use full expansions like $f = \sum_{I_1 \in \mathcal{D}_n} \sum_{J_1 \in \mathcal{D}_m} \langle f, h_{I_1} \otimes u_{J_1} \rangle h_{I_1} \otimes u_{J_1}$ in the beginning of the proof. Following the usual trickery
this leads to the formula
\begin{align*}
\langle Tf, g\rangle = \frac{1}{\pi_{\textrm{good}}^n}  \mathbb{E} \sum_{I_1, I_2 \in \mathcal{D}_n} \sum_{J_1, J_2 \in \mathcal{D}_m}
&\chi_{\textrm{good}}(\textrm{smaller}(I_1,I_2)) \\
& \langle T(h_{I_1} \otimes u_{J_1}), h_{I_2} \otimes u_{J_2}\rangle \langle f, h_{I_1} \otimes u_{J_1}\rangle \langle g, h_{I_2} \otimes u_{J_2}\rangle.
\end{align*}
Here it may at first seem that there is no longer enough independence to add the goodness to $J_1$. However, one may simply write the summation as
\begin{align*}
 \sum_{I_1, I_2 \in \mathcal{D}_n} \sum_{J_1\in \mathcal{D}_m}  \chi_{\textrm{good}}(\textrm{smaller}(I_1,I_2))  \langle T(h_{I_1} \otimes u_{J_1}), g_{I_2}\rangle \langle f, h_{I_1} \otimes u_{J_1}\rangle,
\end{align*}
where one realizes that
\begin{displaymath}
g_{I_2} = \sum_{J_2 \in \mathcal{D}_m}\langle g, h_{I_2} \otimes u_{J_2}\rangle h_{I_2} \otimes u_{J_2} = h_{I_2} \otimes \langle g, h_{I_2} \rangle_1
\end{displaymath}
does not depend on $w_m$. Then one may add the goodness to $J_1$ using independence and repeat the basic recipe to get the proposition.
\end{rem}

\subsection*{Strategy and formulation of the main theorem}
We fix the random variables $w_n$ and $w_m$ which fixes the dyadic grids $\mathcal{D}_n$ and $\mathcal{D}_m$ respectively. Then we study the summation
\begin{displaymath}
\mathop{\sum_{\ell(I_1) \le \ell(I_2)}}_{I_1 \textrm{ good}} \mathop{\sum_{\ell(J_1) \le \ell(J_2)}}_{J_1 \textrm{ good}}
\langle T(h_{I_1} \otimes u_{J_1}), h_{I_2} \otimes u_{J_2}\rangle \langle f, h_{I_1} \otimes u_{J_1}\rangle \langle g, h_{I_2} \otimes u_{J_2}\rangle.
\end{displaymath}
We more often than not suppress from the notation the important fact that $I_1$ and $J_1$ are good. Then we perform the splitting
\begin{displaymath}
\sum_{\ell(I_1) \le \ell(I_2)} = \mathop{\sum_{\ell(I_1) \le \ell(I_2)}}_{d(I_1,I_2) > \ell(I_1)^{\gamma_n}\ell(I_2)^{1-\gamma_n}} + \sum_{I_1 \subsetneq I_2} +  \sum_{I_1 = I_2}
+  \mathop{\mathop{\sum_{\ell(I_1) \le \ell(I_2)}}_{d(I_1,I_2) \le \ell(I_1)^{\gamma_n}\ell(I_2)^{1-\gamma_n}}}_{I_1 \cap I_2 = \emptyset},
\end{displaymath}
and similarly for the summation over the grid $\mathcal{D}_m$. Here $d(A,B)$ denotes the distance of the sets $A$ and $B$ (recall that we use the $\ell^{\infty}$ metric).
The first sum is the separated sum, then we have the inside sum, the equal sum and the nearby sum.
The summation over both the grids is split in to various types which also includes several mixed types. The list is:
separated/separated, separated/inside, separated/equal, separated/nearby, inside/inside, inside/equal, inside/nearby, equal/equal, equal/nearby,
nearby/nearby and some symmetric mixed sums. It seems reasonable to deal with these separately.

Note that actually the mixed sums where $\ell(I_1) \le \ell(I_2)$ and $\ell(J_1) > \ell(J_2)$ or $\ell(I_1) > \ell(I_2)$ and $\ell(J_1) \le \ell(J_2)$ are not completely symmetrical to this case.
However, the relevant difference is only in the full paraproduct that appears in the corresponding inside/inside part. There one gets a bit different paraproducts, which are related to
the assumptions that $T_1(1)$ and $T_1^*(1)$ belong to the product BMO of $\R^n \times \R^m$. 
We comment more on this on Remark \ref{different par}.

The goal is to represent all of these different parts as a sum of shifts with a good decay factor in front. Combining all these cases together leads to our main theorem:
\begin{thm}
For a bi-parameter singular integral operator $T$ as defined above, there holds for some bi-parameter shifts $S^{i_1i_2j_1j_2}_{\mathcal{D}_n\mathcal{D}_m}$ that
\begin{displaymath}
\langle Tf, g\rangle = C_T \mathbb{E}_{w_n} \mathbb{E}_{w_m}\mathop{\sum_{(i_1, i_2) \in \Z_+^2}}_{(j_1, j_2) \in \Z_+^2} 2^{-\max(i_1,i_2)\delta/2}2^{-\max(j_1,j_2)\delta/2} \langle S^{i_1i_2j_1j_2}_{\mathcal{D}_n\mathcal{D}_m}f, g \rangle,
\end{displaymath}
where non-cancellative shifts may only appear if $(i_1, i_2) = (0,0)$ or $(j_1, j_2) = (0,0)$.
\end{thm}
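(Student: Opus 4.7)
The plan is to fix the random grids $\mathcal{D}_n$, $\mathcal{D}_m$, start from the expansion produced by the averaging proposition, and split the sum according to the sixteen bi-types from the Strategy subsection (the cross product of separated/inside/equal/nearby in $\R^n$ with the same four types in $\R^m$). For each bi-type I will produce a family of bi-parameter shifts $S^{i_1i_2j_1j_2}_{\mathcal{D}_n\mathcal{D}_m}$ and the prefactor $2^{-\max(i_1,i_2)\delta/2}2^{-\max(j_1,j_2)\delta/2}$; assembling the families over all bi-types and taking the expectation in $w_n,w_m$ reconstructs the averaging formula and yields the claimed representation.

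For the separated/separated piece I would use the full kernel H\"older condition on $K$ together with the zero means of $h_{I_1}$ and $u_{J_1}$ and the non-homogeneous goodness of these two smaller cubes. Goodness trades the geometric factor $d(I_1,I_2)^{-n-\delta}$ for $\ell(I_2)^{-n-\delta/2}\ell(I_1)^{-\delta/2}$, which is the purpose of the exponent $\gamma_n=\delta/(2n+2\delta)$, and analogously in the second variable. Assigning each good pair $(I_1,I_2)$ to its smallest common dyadic ancestor $K$, and each good pair $(J_1,J_2)$ to its smallest common ancestor $V$, and indexing by $i_k=\log_2(\ell(K)/\ell(I_k))$, $j_k=\log_2(\ell(V)/\ell(J_k))$, turns the resulting matrix elements into coefficients $a_{I_1I_2KJ_1J_2V}$ with exactly the normalization required of a bi-parameter shift, and pulls out the claimed exponential factor.

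Mixed pieces in which one parameter is separated while the other is equal, nearby, or inside would be handled with the one-parameter kernel representations by $K_{f_2,g_2}$ and $K_{f_1,g_1}$, combined with the mixed size/H\"older conditions on $K$ and the diagonal control $C(\chi_V,\chi_V)+C(\chi_V,u_V)+C(u_V,\chi_V)\lesssim|V|$; this reduces each such piece to a one-parameter shift argument at one scale while the other scale is absorbed into the shift coefficient. The purely close pieces (equal/equal, equal/nearby and nearby/nearby) are bounded directly using the weak boundedness property together with the diagonal BMO hypotheses (i)--(iv), which allow one to verify the $L^2$-normalization of the resulting subshifts cube by cube. In all these cases only finitely many shift types $(i_1,i_2,j_1,j_2)$ occur, so the decay prefactor is automatic.

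The heart of the argument, and the main obstacle, is the inside/inside piece and its two partial-adjoint variants. Following the paraproduct trick of Hyt\"onen and of Nazarov--Treil--Volberg I would replace $h_{I_1}$ and $u_{J_1}$, on the scales $I_1\subsetneq I_2$, $J_1\subsetneq J_2$, by their constant values on the children of $I_2$ and $J_2$ that contain them, isolating a main contribution proportional to $\langle T1, h_{I_2}\otimes u_{J_2}\rangle$ times a martingale average of $f$ times $\langle g, h_{I_2}\otimes u_{J_2}\rangle$, plus error terms carrying full Haar cancellation in both variables. Summed over $I_1\subsetneq I_2$, $J_1\subsetneq J_2$ the main contribution assembles into a bi-parameter paraproduct with symbol $T1\in\textup{BMO}_{\mathcal{D}_n\mathcal{D}_m}$, and via $H^1$--BMO duality through the square function $S_{\mathcal{D}_n\mathcal{D}_m}$ it is $L^2$-bounded; it therefore qualifies as a non-cancellative bi-parameter shift of type $(0,0,0,0)$. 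The mixed-size configurations $\ell(I_1)\le\ell(I_2)$, $\ell(J_1)>\ell(J_2)$ (and its mirror) produce paraproducts with symbols $T_1(1)$ and $T_1^*(1)$ in place of $T1$, which is exactly why those partial adjoints are assumed to lie in product BMO. The remaining error terms fall back under the estimates already established for the separated-type pieces, completing the assembly.
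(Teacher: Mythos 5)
Your overall plan --- fix the grids, invoke the averaging proposition, split into the bi-types (separated/inside/equal/nearby crossed with themselves), and package each piece as shifts with the claimed decay --- is the approach of the paper, and your handling of separated/separated, of the purely close pieces, and of the full bi-parameter paraproducts with symbols drawn from $\{T1, T^*1, T_1(1), T_1^*(1)\}$ is sound. Two points in your account of the inside pieces are off, though. First, the paraproduct trick freezes the \emph{larger} Haar functions, not the smaller ones: for $I_1\subsetneq I_2$ one writes $h_{I_2}=\langle h_{I_2}\rangle_{I_1}+s_{I_1I_2}$ with $s_{I_1I_2}$ supported in the complement of the child $I_{2,1}$ containing $I_1$, and similarly $u_{J_2}=\langle u_{J_2}\rangle_{J_1}+s_{J_1J_2}$; the cancellation of $h_{I_1}$ and $u_{J_1}$ is kept intact and played against the kernel in the $s$-terms. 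As a consequence, in the regime $\ell(I_1)\le\ell(I_2)$, $\ell(J_1)\le\ell(J_2)$ that the proof actually works in, the surviving constant term is $\langle h_{I_2}\rangle_{I_1}\langle u_{J_2}\rangle_{J_1}\langle T(h_{I_1}\otimes u_{J_1}),1\rangle$, so the full paraproduct symbol is $T^*1$, not $T1$; the symbol $T1$ appears only in the mirror regime.

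Second, and more substantially, your description of the mixed pieces --- separated/inside, inside/equal, inside/nearby --- glosses over the half-paraproduct structure, which cannot be ``absorbed into the shift coefficient.'' In separated/inside, after writing $u_{J_2}=\langle u_{J_2}\rangle_{J_1}+s_{J_1J_2}$, the constant term produces a non-cancellative shift of type $(i_1,i_2,0,0)$ of the form $\sum_K\sum_{I_1,I_2\subset K}h_{I_2}\otimes\Pi^*_{b_{I_1I_2}}(\langle f,h_{I_1}\rangle_1)$, where $\Pi_{b_{I_1I_2}}$ is a one-parameter paraproduct on $\R^m$ with symbol $b_{I_1I_2}=\langle T^*(h_{I_2}\otimes 1),h_{I_1}\rangle_1/(C2^{-i_1\delta/2})$. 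Verifying that this is an admissible shift requires two ingredients that are genuinely new in the bi-parameter setting and absent from your outline: a uniform BMO bound $\|b_{I_1I_2}\|_{\textup{BMO}(\R^m)}\lesssim|I_1|^{1/2}|I_2|^{1/2}/|K|$, which is precisely where the diagonal control $C(u_V,\chi_V)\lesssim|V|$ (rather than just $C(\chi_V,\chi_V)\lesssim|V|$) is needed, and an $L^2$ normalization proof for the assembled half paraproduct obtained by combining the one-parameter paraproduct bound with orthogonality of the $h_{I_2}$ and a Cauchy--Schwarz over the $I_1\subset K$ at a fixed scale. The analogous half paraproducts in inside/equal and inside/nearby must be treated the same way. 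Without these estimates the representation does not close, since one cannot otherwise certify that all subshifts of the resulting non-cancellative operators are $L^2$ contractions.
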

\begin{cor}
A bi-parameter singular integral $T$ as defined above is $L^2$ bounded.
\end{cor}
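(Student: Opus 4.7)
The plan is to start from the averaging formula of Proposition~2.1, fix the grids $\mathcal{D}_n$ and $\mathcal{D}_m$, and analyze
\[
\mathop{\sum_{\ell(I_1)\le \ell(I_2)}}_{I_1 \text{ good}} \mathop{\sum_{\ell(J_1)\le \ell(J_2)}}_{J_1 \text{ good}} \langle T(h_{I_1}\otimes u_{J_1}), h_{I_2}\otimes u_{J_2}\rangle \langle f, h_{I_1}\otimes u_{J_1}\rangle\langle g, h_{I_2}\otimes u_{J_2}\rangle,
\]
together with the three companion sums obtained by reversing the orderings of $(\ell(I_1),\ell(I_2))$ and $(\ell(J_1),\ell(J_2))$. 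Using the four-fold position splitting (separated, inside, equal, nearby) on each parameter, I obtain sixteen mixed subsums, and will exhibit each as a tail of shift operators with summable weights $2^{-\max(i_1,i_2)\delta/2}2^{-\max(j_1,j_2)\delta/2}$. At the end the expectation $\mathbb{E}_{w_n}\mathbb{E}_{w_m}$ is put back in.

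In every subsum that is separated on at least one parameter, I expand the matrix coefficient as a kernel integral and exploit cancellation of the Haar function on the good smaller cube. The mixed H\"older/size kernel hypothesis appropriate to the combination then replaces the kernel by a tensor-form size bound; goodness of the smaller cube turns the naive estimate $d(I_1,I_2)\le \ell(I_1\vee I_2)$ into the effective decay $\ell(I_1)^{\delta/2}\ell(I_2)^{\delta/2}/\ell(I_1\vee I_2)^{n+\delta}$, and analogously in $\R^m$. Organising the pairs by the dyadic ancestors $K = I_1\vee I_2$, $V = J_1\vee J_2$ and the side-length ratios $2^{-i_1}=\ell(I_1)/\ell(K)$, $2^{-i_2}=\ell(I_2)/\ell(K)$, $2^{-j_1}=\ell(J_1)/\ell(V)$, $2^{-j_2}=\ell(J_2)/\ell(V)$, I recognise the resulting expression as a cancellative shift of type $(i_1,i_2,j_1,j_2)$ carrying exactly the desired weight, once one verifies the subshift normalisation $\le 1$.

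The inside subsums demand paraproduct decompositions. When $I_1\subsetneq I_2$, I pull the constant value of $h_{I_2}$ on $I_1^{(i_2-i_1)}$ outside and rewrite the matrix element in terms of $\langle T(h_{I_1}\otimes u_{J_1}),\chi_{I_2}\otimes(\cdot)\rangle$; the analogous manoeuvre on $\mathcal{D}_m$ in the inside/inside case produces a genuine bi-parameter paraproduct, absorbed via the assumption that $T1, T^*1, T_1(1), T_1^*(1)$ lie in $\textup{BMO}_{\mathcal{D}_n\mathcal{D}_m}$ (the partial-adjoint variants cover the asymmetric orderings of side-lengths, as flagged in the referenced remark on different paraproducts). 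When only one parameter is in an inside regime and the other is separated, equal, or nearby, I invoke the one-parameter kernel representations via $K_{f_2,g_2}$ or $K_{f_1,g_1}$; the diagonal control $C(\chi_V,\chi_V)+C(\chi_V,u_V)+C(u_V,\chi_V)\le C|V|$ is exactly what is needed to turn these mixed paraproducts into non-cancellative shifts of type $(0,0,j_1,j_2)$ or $(i_1,i_2,0,0)$ with the correct normalisation. The equal and nearby subsums, which involve only $O(1)$ pairs per cube, are collapsed directly into $(0,0,\cdot,\cdot)$, $(\cdot,\cdot,0,0)$, or $(0,0,0,0)$ shifts by the weak boundedness property and the diagonal BMO conditions (i)--(iv).

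The main obstacle I anticipate is the inside/inside subsum: no kernel estimate is available on either parameter, and the entire contribution must be absorbed into a product paraproduct controlled only through the bi-parameter BMO hypothesis. Verifying that the resulting coefficients actually satisfy the $L^2$-subshift normalisation requires an $H^1$--BMO duality argument on $\R^n\times \R^m$ in the spirit of Chang--Fefferman, and is not a mere tensor product of one-parameter arguments. A secondary technical headache is the accounting of the mixed inside/separated and inside/nearby pieces, where the $j_1,j_2$ indices (or the $i_1,i_2$ indices) are produced by the decay machinery on one factor while the other factor contributes non-cancellative Haar functions; keeping this consistent with the single decay weight $2^{-\max(i_1,i_2)\delta/2}2^{-\max(j_1,j_2)\delta/2}$ is where the non-homogeneous/goodness formalism really earns its keep.
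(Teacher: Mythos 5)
The corollary is a one-line deduction from Theorem~2.10, and your proposal bypasses this entirely in favour of re-sketching the proof of the theorem itself. Once the representation
\[
\langle Tf, g\rangle = C_T\, \mathbb{E}_{w_n}\mathbb{E}_{w_m}\sum_{(i_1,i_2),(j_1,j_2)\in\Z_+^2} 2^{-\max(i_1,i_2)\delta/2}\,2^{-\max(j_1,j_2)\delta/2}\,\langle S^{i_1i_2j_1j_2}_{\mathcal{D}_n\mathcal{D}_m}f,g\rangle
\]
is available, the $L^2$ bound is immediate: by definition every shift satisfies $\|S^{i_1i_2j_1j_2}_{\mathcal{D}_n\mathcal{D}_m}\|_{L^2\to L^2}\le 1$ (and the paper separately verifies this for the non-cancellative shifts that arise, e.g.\ Proposition~\ref{singlepara}), the numerical series $\sum 2^{-\max(i_1,i_2)\delta/2}2^{-\max(j_1,j_2)\delta/2}$ converges, and the expectation is an average of numbers each bounded by a constant times $\|f\|_2\|g\|_2$. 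That is the entire proof.

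What you have written instead is a high-level outline of the case analysis behind Theorem~2.10 --- separated/separated through nearby/nearby, the paraproducts, the diagonal BMO input --- which is all content the paper already packages into the theorem. You never close the loop: the words ``$L^2$ bounded'' do not appear in your argument and you stop at ``the expectation is put back in'' without observing that the summability of the weights plus the uniform shift bound is what finishes the job. Also, the decay you write in the separated case, $\ell(I_1)^{\delta/2}\ell(I_2)^{\delta/2}/\ell(I_1\vee I_2)^{n+\delta}$, is not what goodness gives; with $K=I_1\vee I_2$ and $i_1\ge i_2$ one obtains $\ell(I_1)^{\delta/2}\ell(K)^{-n-\delta/2}$, which is exactly what yields the single factor $2^{-i_1\delta/2}=2^{-\max(i_1,i_2)\delta/2}$. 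In short: your sketch targets the theorem, not the corollary, and even as a theorem sketch it omits the elementary but essential convergence argument that actually delivers boundedness.
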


We note that all of the appearing non-cancellative shifts will have a certain paraproduct structure, and this structure is explicit in the proof. For example in \cite{HLMORSUT}, where the one-parameter representation theorem is applied,
it is important to know the explicit structure of the non-cancellative shifts.

The rest of the paper is dedicated to the piece by piece proof of this theorem. We use $X \lesssim Y$ to mean $X \le CY$ for some constant $C$ and $X \sim Y$ to mean $Y \lesssim X \lesssim Y$. Of course, we cannot absorb just any constants, but
only ones that depend on the dimensions or the various constants from the assumptions concerning $T$.
\section{Separated/separated}
Let $I_1 \vee I_2 = \bigcap_{K \in \mathcal{D}_n, \, K \supset I_1 \cup I_2} K$ and $J_1 \vee J_2 = \bigcap_{V \in \mathcal{D}_m, \, V \supset J_1 \cup J_2} V$.
The separation conditions together with goodness imply $\ell(I_1)^{\gamma_n}\ell(I_1 \vee I_2)^{1-\gamma_n} \lesssim d(I_1,I_2)$ and $\ell(J_1)^{\gamma_m}\ell(J_1 \vee J_2)^{1-\gamma_m} \lesssim d(J_1,J_2)$.

Let us write
\begin{align*}
&\mathop{\sum_{\ell(I_1) \le \ell(I_2)}}_{d(I_1,I_2) > \ell(I_1)^{\gamma_n}\ell(I_2)^{1-\gamma_n}} \mathop{\sum_{\ell(J_1) \le \ell(J_2)}}_{d(J_1,J_2) > \ell(J_1)^{\gamma_m}\ell(J_2)^{1-\gamma_m}} \\
&= \mathop{\sum_{i_2 \ge 1}}_{j_2 \ge 1}  \mathop{\sum_{i_1 \ge i_2}}_{j_1 \ge j_2} \mathop{\sum_{K \in \mathcal{D}_n}}_{V \in \mathcal{D}_m}  
\mathop{\mathop{\sum_{d(I_1,I_2) > \ell(I_1)^{\gamma_n}\ell(I_2)^{1-\gamma_n}}}_{I_1 \vee I_2 = K}}_{\ell(I_1) = 2^{-i_1}\ell(K), \,\ell(I_2) = 2^{-i_2}\ell(K)}
\mathop{\mathop{\sum_{d(J_1,J_2) > \ell(J_1)^{\gamma_m}\ell(J_2)^{1-\gamma_m}}}_{J_1 \vee J_2 = V}}_{\ell(J_1) = 2^{-j_1}\ell(V), \,\ell(J_2) = 2^{-j_2}\ell(V)}.
\end{align*}
\begin{lem}
For $I_1,I_2, J_1, J_2$ in the above summation, we have the estimate
\begin{align*}
|\langle T(h_{I_1} \otimes u_{J_1}), &h_{I_2} \otimes u_{J_2}\rangle| \\
&\lesssim \frac{|I_1|^{1/2}|I_2|^{1/2}}{|K|} \frac{|J_1|^{1/2}|J_2|^{1/2}}{|V|} \Big(\frac{\ell(I_1)}{\ell(K)}\Big)^{\delta/2}  \Big(\frac{\ell(J_1)}{\ell(V)}\Big)^{\delta/2} \\
&=  2^{-i_1\delta/2}\frac{|I_1|^{1/2}|I_2|^{1/2}}{|K|} 2^{-j_1\delta/2} \frac{|J_1|^{1/2}|J_2|^{1/2}}{|V|}. 
\end{align*}
\end{lem}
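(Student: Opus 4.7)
Since $\textrm{spt}\,h_{I_1} \cap \textrm{spt}\,h_{I_2} = \emptyset$ (as $I_1 \cap I_2 = \emptyset$ in the separated sum) and $\textrm{spt}\,u_{J_1}\cap \textrm{spt}\,u_{J_2}=\emptyset$, the full kernel representation applies and we can write
\begin{displaymath}
\langle T(h_{I_1}\otimes u_{J_1}), h_{I_2}\otimes u_{J_2}\rangle = \int\!\!\int K(x,y)\,h_{I_1}(y_1)u_{J_1}(y_2)h_{I_2}(x_1)u_{J_2}(x_2)\,dx\,dy.
\end{displaymath}
My plan is to exploit the zero means of $h_{I_1}$ and $u_{J_1}$ by subtracting off values at the centers $c_{I_1}$ and $c_{J_1}$: one replaces $K(x,y)$ by the full mixed second-order difference
\begin{displaymath}
K(x,y) - K(x,(c_{I_1},y_2)) - K(x,(y_1,c_{J_1})) + K(x,(c_{I_1},c_{J_1})),
\end{displaymath}
which costs nothing in the integral. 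Then I would apply the mixed H\"older estimate in both $y$-variables (the first of the four double-H\"older conditions) with $y_1' = c_{I_1}$ and $y_2' = c_{J_1}$.

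The conditions $|y_1 - c_{I_1}|\le \ell(I_1)/2 \le |x_1-y_1|/2$ and $|y_2 - c_{J_1}|\le \ell(J_1)/2 \le |x_2-y_2|/2$ must be checked; this is where the goodness of $I_1,J_1$ enters. Indeed, goodness plus the separation condition give $d(I_1,I_2)\gtrsim \ell(I_1)^{\gamma_n}\ell(K)^{1-\gamma_n}\ge \ell(I_1)$ (and similarly for the $J$-side), so for any $y_1\in I_1$, $x_1\in I_2$ one has $|x_1-y_1|\ge d(I_1,I_2)\ge 2\ell(I_1)$ (up to fixing the constant in the separation condition; this is harmless). After applying the H\"older bound, using the pointwise bounds $|h_{I_i}|\le |I_i|^{-1/2}$, $|u_{J_i}|\le |J_i|^{-1/2}$ and the trivial estimates $|x_1-y_1|\ge d(I_1,I_2)$, $|x_2-y_2|\ge d(J_1,J_2)$ in the resulting spatial integral, I get
\begin{displaymath}
|\langle T(h_{I_1}\otimes u_{J_1}), h_{I_2}\otimes u_{J_2}\rangle|\lesssim |I_1|^{1/2}|I_2|^{1/2}|J_1|^{1/2}|J_2|^{1/2}\,\frac{\ell(I_1)^{\delta}}{d(I_1,I_2)^{n+\delta}}\,\frac{\ell(J_1)^{\delta}}{d(J_1,J_2)^{m+\delta}}.
\end{displaymath}

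Finally, I convert the separation distances to the enveloping cubes using goodness: $d(I_1,I_2)\gtrsim \ell(I_1)^{\gamma_n}\ell(K)^{1-\gamma_n}$. The choice $\gamma_n=\delta/(2n+2\delta)$ makes the exponent bookkeeping work out perfectly, since $\gamma_n(n+\delta)=\delta/2$ and $(1-\gamma_n)(n+\delta)=n+\delta/2$; hence
\begin{displaymath}
\frac{\ell(I_1)^{\delta}}{d(I_1,I_2)^{n+\delta}}\lesssim \frac{\ell(I_1)^{\delta/2}}{\ell(K)^{n+\delta/2}}=\frac{1}{|K|}\Big(\frac{\ell(I_1)}{\ell(K)}\Big)^{\delta/2}.
\end{displaymath}
The analogous computation on the $\R^m$-side, using $\gamma_m=\delta/(2m+2\delta)$, produces the factor $|V|^{-1}(\ell(J_1)/\ell(V))^{\delta/2}$, and combining yields precisely the claimed bound.

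The only real obstacle is the exponent juggling in the final step and confirming that $\gamma_n$ was defined exactly so that the separation distance to the power $n+\delta$ converts into $|K|$ times the desired decay factor $(\ell(I_1)/\ell(K))^{\delta/2}$; everything else is a standard application of the mixed-variable H\"older condition combined with goodness.
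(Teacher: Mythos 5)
Your proposal is correct and follows essentially the same route as the paper: both proofs write out the full kernel representation, use the zero means of $h_{I_1}$ and $u_{J_1}$ to subtract the center values $c_{I_1}, c_{J_1}$, apply the mixed double H\"older condition, bound the spatial integral trivially by $d(I_1,I_2)^{-(n+\delta)}d(J_1,J_2)^{-(m+\delta)}$, and then convert the distances to $\ell(K),\ell(V)$ via goodness and the choice of $\gamma_n,\gamma_m$. One small remark: you do not actually need $d(I_1,I_2)\ge 2\ell(I_1)$ or any constant adjustment; the H\"older hypothesis $|y_1-c_{I_1}|\le |x_1-y_1|/2$ already follows from $|y_1-c_{I_1}|\le \ell(I_1)/2$ together with $|x_1-y_1|\ge d(I_1,I_2)>\ell(I_1)^{\gamma_n}\ell(I_2)^{1-\gamma_n}\ge\ell(I_1)$, exactly as in the paper.
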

\begin{proof}
Given a cube $I$ we denote by $c_I$ its center.
We may write
\begin{align*}
&\langle T(h_{I_1} \otimes u_{J_1}), h_{I_2} \otimes u_{J_2}\rangle \\
&= \int_{I_1 \times J_1} \int_{I_2 \times J_2} K(x,y) h_{I_1}(y_1)u_{J_1}(y_2)h_{I_2}(x_1)u_{J_2}(x_2)\,dxdy,
\end{align*}
where we may, using cancellation, replace $K(x,y)$ by
\begin{displaymath}
K(x,y) - K(x, (y_1, c_{J_1})) - K(x, (c_{I_1}, y_2)) + K(x, (c_{I_1}, c_{J_1})).
\end{displaymath}
Since $|y_1 - c_{I_{1}}| \le \ell(I_1)/2 \le \frac{1}{2}\ell(I_1)^{\gamma_n} \ell(I_2)^{1-\gamma_n} \le d(I_1,I_2)/2 \le |x_1 - c_{I_1}|/2$
and similarly $|y_2-c_{J_1}| \le |x_2 - c_{J_1}|/2$, we have
\begin{align*}
|K(x,y) - &K(x, (y_1, c_{J_1})) - K(x, (c_{I_1}, y_2)) + K(x, (c_{I_1}, c_{J_1}))|  \\
&\lesssim \frac{|y_1 - c_{I_{1}}|^{\delta}}{ |x_1 - c_{I_1}|^{n+\delta}}  \frac{|y_2 - c_{J_{1}}|^{\delta}}{ |x_2 - c_{J_1}|^{m+\delta}} \\
&\lesssim \ell(I_1)^{\delta}d(I_1,I_2)^{-n-\delta} \ell(J_1)^{\delta} d(J_1,J_2)^{-m-\delta} \\
&\lesssim \ell(I_1)^{\delta}[\ell(I_1)^{\gamma_n}\ell(K)^{1-\gamma_n}]^{-n-\delta} \ell(J_1)^{\delta} [\ell(J_1)^{\gamma_m}\ell(V)^{1-\gamma_m}]^{-m-\delta} \\
&= \ell(I_1)^{\delta/2}\ell(K)^{-\delta/2}|K|^{-1} \ell(J_1)^{\delta/2}\ell(V)^{-\delta/2}|V|^{-1}.
\end{align*}
Here we used $\ell(I_1)^{\gamma_n}\ell(K)^{1-\gamma_n} \lesssim d(I_1,I_2)$ and $\gamma_n n + \gamma_n \delta = \delta/2$ (and the analogous estimates involving $J_1$, $J_2$, $V$ and $m$).
Recalling the $L^2$ normalization of the Haar functions and the fact that $\ell(I_1)/\ell(K) = 2^{-i_1}$ and $\ell(J_1)/\ell(V) = 2^{-j_1}$
completes the proof.
\end{proof}
We write
\begin{align*}
\langle T(h_{I_1} \otimes u_{J_1}),& h_{I_2} \otimes u_{J_2}\rangle \langle f, h_{I_1} \otimes u_{J_1}\rangle \langle g, h_{I_2} \otimes u_{J_2}\rangle \\
&= C2^{-i_1\delta/2}2^{-j_1\delta/2} \frac{\langle T(h_{I_1} \otimes u_{J_1}), h_{I_2} \otimes u_{J_2}\rangle}{C2^{-i_1\delta/2}2^{-j_1\delta/2} } \langle \langle f, h_{I_1} \otimes u_{J_1} \rangle h_{I_2} \otimes u_{J_2}, g\rangle.
\end{align*}
Define
\begin{displaymath}
a_{I_1I_2KJ_1J_2V} = \frac{\langle T(h_{I_1} \otimes u_{J_1}), h_{I_2} \otimes u_{J_2}\rangle}{C2^{-i_1\delta/2}2^{-j_1\delta/2} }
\end{displaymath}
if all the various goodness and separation conditions appearing in the summations are satisfied, and otherwise set $a_{I_1I_2KJ_1J_2V}  = 0$.
This enables us to write
\begin{align*}
\mathop{\sum_{\ell(I_1) \le \ell(I_2)}}_{d(I_1,I_2) > \ell(I_1)^{\gamma_n}\ell(I_2)^{1-\gamma_n}} \mathop{\sum_{\ell(J_1) \le \ell(J_2)}}_{d(J_1,J_2) > \ell(J_1)^{\gamma_m}\ell(J_2)^{1-\gamma_m}}
\langle T(h_{I_1}& \otimes u_{J_1}), h_{I_2} \otimes u_{J_2} \rangle \\ &\langle f, h_{I_1} \otimes u_{J_1}\rangle \langle g, h_{I_2} \otimes u_{J_2}\rangle 
\end{align*}
in the form
\begin{displaymath}
C\mathop{\sum_{i_2 \ge 1}}_{j_2 \ge 1}  \mathop{\sum_{i_1 \ge i_2}}_{j_1 \ge j_2} 2^{-i_1\delta/2}2^{-j_1\delta/2} \sum_{K,\,V} \langle A^{i_1i_2j_1j_2}_{KV}f, g\rangle,
\end{displaymath}
where
\begin{displaymath}
A^{i_1i_2j_1j_2}_{KV}f = \mathop{\mathop{\sum_{I_1,\,I_2 \subset K}}_{\ell(I_1) = 2^{-i_1}\ell(K)}}_{\ell(I_2) = 2^{-i_2}\ell(K)}
\mathop{\mathop{\sum_{J_1, \,J_2 \subset V}}_{\ell(J_1) = 2^{-j_1}\ell(V)}}_{\ell(J_2) = 2^{-j_2}\ell(V)}
 a_{I_1I_2KJ_1J_2V} \langle f, h_{I_1} \otimes u_{J_1} \rangle h_{I_2} \otimes u_{J_2}
\end{displaymath}
with
\begin{displaymath}
| a_{I_1I_2KJ_1J_2V}| \le \frac{|I_1|^{1/2}|I_2|^{1/2}}{|K|}\frac{|J_1|^{1/2}|J_2|^{1/2}}{|V|}.
\end{displaymath}
The corresponding bi-parameter shift with indices $i_1, i_2, j_1, j_2$ is by definition
\begin{displaymath}
S^{i_1i_2j_1j_2}f = \sum_{K,V} A^{i_1i_2j_1j_2}_{KV}f.
\end{displaymath}
\section{Separated/inside}
As $J_1 \subsetneq J_2$, there is a child $J_{2,1}$ of $J_2$ such that $J_1 \subset J_{2,1}$. We decompose
\begin{align*}
\langle T(h_{I_1} \otimes u_{J_1}), h_{I_2} \otimes u_{J_2}\rangle = \langle T(h_{I_1}& \otimes u_{J_1}), h_{I_2} \otimes s_{J_1J2}\rangle \\
&+ \langle u_{J_2}\rangle_{J_1} \langle T(h_{I_1} \otimes u_{J_1}), h_{I_2} \otimes 1\rangle,
\end{align*}   
where $s_{J_1J_2} = \chi_{J_{2,1}^c}[u_{J_2} - \langle u_{J_2} \rangle_{J_{2,1}}]$. The relevant properties of $s_{J_1J_2}$ are $|s_{J_1J_2}| \le 2|J_2|^{-1/2}$ and spt$\,s_{J_1J_2} \subset J_{2,1}^c$.

We write
\begin{align*}
&\mathop{\sum_{\ell(I_1) \le \ell(I_2)}}_{d(I_1,I_2) > \ell(I_1)^{\gamma_n}\ell(I_2)^{1-\gamma_n}}  \sum_{J_1 \subsetneq J_2} \\& =  \sum_{i_2 \ge 1} \sum_{i_1 \ge i_2} \sum_{j_1 \ge 1}
\sum_{K \in \mathcal{D}_n} \sum_{J_2 \in \mathcal{D}_m} \mathop{\mathop{\sum_{d(I_1,I_2) > \ell(I_1)^{\gamma_n}\ell(I_2)^{1-\gamma_n}}}_{I_1 \vee I_2 = K}}_{\ell(I_1) = 2^{-i_1}\ell(K), \,\ell(I_2) = 2^{-i_2}\ell(K)}
\mathop{\sum_{J_1 \subset J_2}}_{\ell(J_1) = 2^{-j_1}\ell(J_2)}.
\end{align*}

\begin{lem}\label{sep,in}
For $I_1,I_2, J_1, J_2$ in the above summation, we have the estimate
\begin{align*}
|\langle T(h_{I_1} \otimes u_{J_1})&, h_{I_2} \otimes s_{J_1J2}\rangle| \\
&\lesssim \frac{|I_1|^{1/2}|I_2|^{1/2}}{|K|} \frac{|J_1|^{1/2}}{|J_2|^{1/2}} \Big(\frac{\ell(I_1)}{\ell(K)}\Big)^{\delta/2}  \Big(\frac{\ell(J_1)}{\ell(J_2)}\Big)^{\delta/2} \\
&=  2^{-i_1\delta/2}\frac{|I_1|^{1/2}|I_2|^{1/2}}{|K|} 2^{-j_1\delta/2} \frac{|J_1|^{1/2}}{|J_2|^{1/2}}. 
\end{align*}
\end{lem}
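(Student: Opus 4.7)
The plan is to adapt the separated/separated argument by replacing the missing $J$-side separation with the goodness of $J_1$ inside the dyadic ancestor $J_{2,1}$, while still exploiting the double cancellation $\int h_{I_1}=\int u_{J_1}=0$. First, since $\operatorname{spt} h_{I_1}\cap\operatorname{spt} h_{I_2}=\emptyset$ (by the $I$-separation) and $\operatorname{spt} u_{J_1}\subset J_1\subset J_{2,1}$ is disjoint from $\operatorname{spt} s_{J_1J_2}\subset J_{2,1}^c$, I would invoke the full kernel representation and then subtract values at the centers to replace $K(x,y)$ by
\begin{displaymath}
K(x,y)-K(x,(c_{I_1},y_2))-K(x,(y_1,c_{J_1}))+K(x,(c_{I_1},c_{J_1})),
\end{displaymath}
bounding this double difference by the first mixed H\"older condition.

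The hypothesis $|y_1-c_{I_1}|\le|x_1-c_{I_1}|/2$ is the usual consequence of $d(I_1,I_2)>\ell(I_1)^{\gamma_n}\ell(I_2)^{1-\gamma_n}\ge\ell(I_1)$. For $|y_2-c_{J_1}|\le|x_2-c_{J_1}|/2$ I would apply the goodness of $J_1$ to $J_{2,1}\in\mathcal{D}_m$: in the main range $j_1\ge r+1$ one has $\ell(J_{2,1})\ge 2^r\ell(J_1)$, and goodness gives $d(J_1,\partial J_{2,1})\gtrsim\ell(J_1)^{\gamma_m}\ell(J_2)^{1-\gamma_m}$. Since $\operatorname{spt} s_{J_1J_2}\subset J_{2,1}^c$, this forces $|x_2-y_2|\gtrsim\ell(J_1)^{\gamma_m}\ell(J_2)^{1-\gamma_m}\gg\ell(J_1)$ for $y_2\in J_1$ and $x_2\in\operatorname{spt} s_{J_1J_2}$, yielding
\begin{displaymath}
|K_{\textrm{diff}}(x,y)|\lesssim\frac{\ell(I_1)^\delta}{|x_1-c_{I_1}|^{n+\delta}}\cdot\frac{\ell(J_1)^\delta}{|x_2-c_{J_1}|^{m+\delta}}.
\end{displaymath}

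After this the integral factorizes. The $(x_1,y_1)$ part reproduces $2^{-i_1\delta/2}|I_1|^{1/2}|I_2|^{1/2}/|K|$ exactly as in the previous section, via $d(I_1,I_2)^{-n-\delta}\lesssim\ell(I_1)^{-\delta/2}\ell(K)^{-n-\delta/2}$ and $\gamma_n(n+\delta)=\delta/2$. For the $(x_2,y_2)$ part I would use $\|s_{J_1J_2}\|_\infty\le 2|J_2|^{-1/2}$, $\int|u_{J_1}|\le|J_1|^{1/2}$, and the integrability of $|x_2-c_{J_1}|^{-m-\delta}$ over $J_{2,1}^c$, which gives $\int_{J_{2,1}^c}|x_2-c_{J_1}|^{-m-\delta}dx_2\lesssim[\ell(J_1)^{\gamma_m}\ell(J_2)^{1-\gamma_m}]^{-\delta}$. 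Collecting the $J$-factors yields $(\ell(J_1)/\ell(J_2))^{(1-\gamma_m)\delta}|J_1|^{1/2}|J_2|^{-1/2}$, and using $\gamma_m<1/2$, hence $(1-\gamma_m)\delta\ge\delta/2$, produces the claimed $2^{-j_1\delta/2}|J_1|^{1/2}|J_2|^{-1/2}$.

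The main obstacle I expect is that $s_{J_1J_2}$ is \emph{not} compactly supported: it equals the nonzero constant $-\langle u_{J_2}\rangle_{J_{2,1}}$ throughout $J_2^c$. A bare size/H\"older estimate keeping only the $y_1$-cancellation would leave a factor $|x_2-y_2|^{-m}$ whose integral over $J_{2,1}^c$ diverges logarithmically, so the full double-H\"older estimate relying on the cancellation of $u_{J_1}$ as well is essential. In the bounded range $1\le j_1\le r$ where goodness with respect to $J_{2,1}$ is not directly available, the trivial bound $|x_2-c_{J_1}|\ge\ell(J_1)/2$ on $J_{2,1}^c$ still makes the $(x_2,y_2)$-integral converge (after splitting off the thin annulus $|x_2-c_{J_1}|\in[\ell(J_1)/2,\ell(J_1))$ and treating it via the mixed size-H\"older condition), and the resulting constant loss is harmless since $2^{-j_1\delta/2}$ is bounded below by $2^{-r\delta/2}$ on this range.
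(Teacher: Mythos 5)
Your proposal is correct and follows essentially the same route as the paper: for $j_1>r$ you use goodness of $J_1$ inside $J_{2,1}$ together with the double cancellation and the full H\"older estimate, and for $1\le j_1\le r$ you split $s_{J_1J_2}$ into a near part (handled by the mixed H\"older/size estimate, which still uses the $h_{I_1}$ cancellation and yields a finite $x_2$--$y_2$ integral) and a far part (handled by the full H\"older estimate). The only cosmetic difference is the choice of threshold in the near/far split ($|x_2-c_{J_1}|<\ell(J_1)$ versus the paper's $x_2\in 3J_1$), and your observation that $s_{J_1J_2}$ is not compactly supported---so bare size estimates would fail---is exactly the correct reason why the $u_{J_1}$ cancellation must be retained in the far part.
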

\begin{proof}
There is good separation by the goodness of $J_1$ if $\ell(J_1) < 2^{-r}\ell(J_2)$. Indeed, in this case there holds $d(J_1, J_{2,1}^c) \ge
2\ell(J_1)^{\gamma_m}\ell(J_{2,1})^{1-\gamma_m} \ge \ell(J_1)^{\gamma_m}\ell(J_2)^{1-\gamma_m}$. Then we may write
\begin{align*}
\langle T(h_{I_1} \otimes u_{J_1}), &h_{I_2} \otimes s_{J_1J2}\rangle \\ &= \int_{I_1 \times J_1} \int_{I_2 \times J_{2,1}^c}  K(x,y) h_{I_1}(y_1)u_{J_1}(y_2)h_{I_2}(x_1) s_{J_1J_2}(x_2)\,dx\,dy,
\end{align*}
and replace $K(x,y)$ by $K(x,y) - K(x, (y_1, c_{J_1})) - K(x, (c_{I_1}, y_2)) + K(x, (c_{I_1}, c_{J_1}))$ using the cancellation of $u_{J_1}$ and $h_{I_1}$. We may utilize the kernel estimates to get
\begin{align*}
|K(x,y) - K(x, (y_1, c_{J_1})) -& K(x, (c_{I_1}, y_2)) + K(x, (c_{I_1}, c_{J_1}))| \\
&\lesssim \ell(I_1)^{\delta/2}\ell(K)^{-\delta/2}|K|^{-1} \ell(J_1)^{\delta} \frac{1}{|x_2-c_{J_1}|^{m+\delta}}.
\end{align*}
This yields
\begin{align*}
|\langle T(h_{I_1} \otimes u_{J_1})&, h_{I_2} \otimes s_{J_1J2}\rangle| \\ 
&\lesssim \frac{|I_1|^{1/2}|I_2|^{1/2}}{|K|}  \Big(\frac{\ell(I_1)}{\ell(K)}\Big)^{\delta/2}  \frac{|J_1|^{1/2}}{|J_2|^{1/2}} \ell(J_1)^{\delta} \int_{J_{2,1}^c} \frac{dx_2}{|x_2-c_{J_1}|^{m+\delta}},
\end{align*}
where
\begin{align*}
\int_{J_{2,1}^c} \frac{dx_2}{|x_2-c_{J_1}|^{m+\delta}} &\lesssim \int_{\R^m \setminus B(c_{J_1}, d(J_1,J_{2,1}^c))} \frac{dx_2}{|x_2-c_{J_1}|^{m+\delta}} \\
&\lesssim d(J_1,J_{2,1}^c)^{-\delta} \lesssim  \ell(J_1)^{-\delta/2}\ell(J_2)^{-\delta/2}.
\end{align*}
Therefore, we have
\begin{align*}
|\langle T(h_{I_1} \otimes u_{J_1})&, h_{I_2} \otimes s_{J_1J2}\rangle| \\
&\lesssim \frac{|I_1|^{1/2}|I_2|^{1/2}}{|K|}  \Big(\frac{\ell(I_1)}{\ell(K)}\Big)^{\delta/2}  \frac{|J_1|^{1/2}}{|J_2|^{1/2}}  \Big(\frac{\ell(J_1)}{\ell(J_2)}\Big)^{\delta/2}.
\end{align*}

We still need to deal with the case $2^{-r}\ell(J_2) \le \ell(J_1) (\le \ell(J_2))$. This time we split
\begin{align*}
\langle T(h_{I_1} \otimes u_{J_1}), h_{I_2} \otimes s_{J_1J2}\rangle = \langle T(h_{I_1} &\otimes u_{J_1}), h_{I_2} \otimes (\chi_{3J_1}s_{J_1J2})\rangle \\ &+ \langle T(h_{I_1} \otimes u_{J_1}), h_{I_2} \otimes (\chi_{(3J_1)^c}s_{J_1J2})\rangle.
\end{align*}
We have that $\langle T(h_{I_1} \otimes u_{J_1}), h_{I_2} \otimes (\chi_{3J_1}s_{J_1J2})\rangle$ equals
\begin{displaymath}
\int_{I_1 \times J_1} \int_{I_2 \times (3J_1 \setminus J_{2,1})} [K(x,y) - K(x,(c_{I_1}, y_2))]h_{I_1}(y_1)u_{J_1}(y_2)h_{I_2}(x_1)s_{J_1J_2}(x_2)\,dx\,dy
\end{displaymath}
so we can estimate using the mixed H\"older and size estimate that
\begin{align*}
&|\langle T(h_{I_1} \otimes u_{J_1}), h_{I_2} \otimes (\chi_{3J_1}s_{J_1J2})\rangle| \\
&\lesssim \frac{|I_1|^{1/2}|I_2|^{1/2}}{|K|} \Big(\frac{\ell(I_1)}{\ell(K)}\Big)^{\delta/2} |J_1|^{-1/2}|J_2|^{-1/2} \int_{J_1} \int_{3J_1 \setminus J_1} \frac{1}{|x_2-y_2|^m}\,dx_2\,dy_2 \\
&\lesssim  \frac{|I_1|^{1/2}|I_2|^{1/2}}{|K|} \Big(\frac{\ell(I_1)}{\ell(K)}\Big)^{\delta/2}  \frac{|J_1|^{1/2}}{|J_2|^{1/2}} \\
&\lesssim \frac{|I_1|^{1/2}|I_2|^{1/2}}{|K|} \Big(\frac{\ell(I_1)}{\ell(K)}\Big)^{\delta/2}  \frac{|J_1|^{1/2}}{|J_2|^{1/2}} \Big(\frac{\ell(J_1)}{\ell(J_2)}\Big)^{\delta/2}.
\end{align*}

In the term $\langle T(h_{I_1} \otimes u_{J_1}), h_{I_2} \otimes (\chi_{(3J_1)^c}s_{J_1J2})\rangle$ we have good separation everywhere, so the H\"older estimate for $K$ yields
\begin{align*}
|\langle T(h_{I_1} \otimes & u_{J_1}), h_{I_2} \otimes (\chi_{(3J_1)^c}s_{J_1J2})\rangle| \\
&\lesssim \frac{|I_1|^{1/2}|I_2|^{1/2}}{|K|}  \Big(\frac{\ell(I_1)}{\ell(K)}\Big)^{\delta/2}  \frac{|J_1|^{1/2}}{|J_2|^{1/2}} \ell(J_1)^{\delta} \int_{(3J_1)^c} \frac{dx_2}{|x_2-c_{J_1}|^{m+\delta}} \\
&\lesssim \frac{|I_1|^{1/2}|I_2|^{1/2}}{|K|}  \Big(\frac{\ell(I_1)}{\ell(K)}\Big)^{\delta/2}  \frac{|J_1|^{1/2}}{|J_2|^{1/2}} \\
&\lesssim \frac{|I_1|^{1/2}|I_2|^{1/2}}{|K|}  \Big(\frac{\ell(I_1)}{\ell(K)}\Big)^{\delta/2}  \frac{|J_1|^{1/2}}{|J_2|^{1/2}}  \Big(\frac{\ell(J_1)}{\ell(J_2)}\Big)^{\delta/2}.
\end{align*}
\end{proof}
The above lemma enables us to write
\begin{displaymath}
\mathop{\sum_{\ell(I_1) \le \ell(I_2)}}_{d(I_1,I_2) > \ell(I_1)^{\gamma_n}\ell(I_2)^{1-\gamma_n}}  \sum_{J_1 \subsetneq J_2} \langle T(h_{I_1} \otimes u_{J_1}), h_{I_2} \otimes s_{J_1J2}\rangle
\langle f, h_{I_1} \otimes u_{J_1}\rangle \langle g, h_{I_2} \otimes u_{J_2}\rangle
\end{displaymath}
in the form
\begin{displaymath}
C\sum_{i_2 \ge 1} \sum_{i_1 \ge i_2} \sum_{j_1 \ge 1} 2^{-i_1\delta/2} 2^{-j_1\delta/2} \langle S^{i_1i_2j_10}f,g\rangle.
\end{displaymath}

Next, we deal with the series with the term $\langle u_{J_2}\rangle_{J_1} \langle T(h_{I_1} \otimes u_{J_1}), h_{I_2} \otimes 1\rangle$. This will yield shifts of the type $(i_1,i_2, 0,0)$ which are non-cancellative (their $\R^m$ parts are paraproducts in a certain sense).
As these shifts will be non-cancellative, we will also have to worry about their $L^2$ boundedness properties.

Write
\begin{align*}
&\sum_{J_1 \subsetneq J_2} \langle u_{J_2}\rangle_{J_1} \langle T(h_{I_1} \otimes u_{J_1}), h_{I_2} \otimes 1\rangle \langle f, h_{I_1} \otimes u_{J_1}\rangle  \langle g, h_{I_2} \otimes u_{J_2}\rangle \\
&= \sum_{J_1} \Big\langle  \sum_{J_2} \langle g, h_{I_2} \otimes u_{J_2}\rangle u_{J_2}\Big\rangle_{J_1} \langle T(h_{I_1} \otimes u_{J_1}), h_{I_2} \otimes 1\rangle \langle f, h_{I_1} \otimes u_{J_1}\rangle \\
&= \sum_V \langle \langle g, h_{I_2}\rangle_1 \rangle_{V}   \langle T(h_{I_1} \otimes u_{V}), h_{I_2} \otimes 1\rangle \langle f, h_{I_1} \otimes u_{V}\rangle.
\end{align*}
The summands can further be written in the form
\begin{displaymath}
|V|^{-1/2}  \langle T(h_{I_1} \otimes u_{V}), h_{I_2} \otimes 1\rangle \langle \langle f, h_{I_1} \otimes u_{V}\rangle h_{I_2} \otimes u^0_V, g\rangle,
\end{displaymath}
where $u^0_V = |V|^{-1/2}\chi_V$. Written in this way it is evident that we will have the required shift structure of the type $(i_1, i_2, 0, 0)$.
\begin{lem}\label{normali}
The correct normalization
\begin{displaymath}
|\langle T(h_{I_1} \otimes u_{V}), h_{I_2} \otimes 1\rangle| \lesssim \frac{|I_1|^{1/2}|I_2|^{1/2}}{|K|}\Big(\frac{\ell(I_1)}{\ell(K)}\Big)^{\delta/2}|V|^{1/2}
\end{displaymath}
holds.
\end{lem}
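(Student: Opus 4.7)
The plan is to first give meaning to the pairing by decomposing the constant function on $\R^m$ as $1 = \chi_V + \chi_{3V\setminus V} + \chi_{(3V)^c}$ and bounding each of the three resulting pairings separately. In every case the estimate reduces to producing the factor $|I_1|^{1/2}|I_2|^{1/2}|V|^{1/2}\cdot \ell(I_1)^{\delta}/d(I_1,I_2)^{n+\delta}$. The goodness of $I_1$ together with $I_1\vee I_2 = K$ yields $d(I_1,I_2) \gtrsim \ell(I_1)^{\gamma_n}\ell(K)^{1-\gamma_n}$, and the identity $\gamma_n(n+\delta) = \delta/2$ then converts this factor to $|I_1|^{1/2}|I_2|^{1/2}|V|^{1/2}\cdot (\ell(I_1)/\ell(K))^{\delta/2}/|K|$, as required.

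For the piece with $\chi_V$, the supports in the $\R^n$ variable are disjoint, so I would invoke the one-parameter kernel representation with kernel $K_{u_V,\chi_V}$. Rescaling $u_V$ to $\tilde u_V = |V|^{1/2}u_V$, which is $V$-adapted with zero mean and $|\tilde u_V|\le 1$, the diagonal hypothesis gives $C(\tilde u_V,\chi_V) \lesssim |V|$, hence $C(u_V,\chi_V) \lesssim |V|^{1/2}$ by linearity. Subtracting $K_{u_V,\chi_V}(x_1, c_{I_1})$ using the cancellation of $h_{I_1}$ and applying the H\"older continuity of $K_{u_V,\chi_V}$ (whose hypothesis $|y_1 - c_{I_1}| \le |x_1 - c_{I_1}|/2$ is satisfied since $\ell(I_1) \le \ell(I_2)$ and $d(I_1,I_2) > \ell(I_1)^{\gamma_n}\ell(I_2)^{1-\gamma_n}$) produces the target bound at once.

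For the outer piece with $\chi_{(3V)^c}$, all four supports are pairwise disjoint and, because $|y_2 - c_V| \le \ell(V)/2 \le |x_2 - c_V|/2$, one has access to the full four-point H\"older estimate with both the cancellation of $h_{I_1}$ and that of $u_V$ simultaneously in play. A straightforward computation, closed by the tail bound $\int_{(3V)^c} |x_2 - c_V|^{-m-\delta}\,dx_2 \lesssim \ell(V)^{-\delta}$, produces the required factor.

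The main obstacle is the annulus piece with $\chi_{3V\setminus V}$. All four supports are again pairwise disjoint, but since $y_2 \in V$ may sit arbitrarily close to $\partial V$ and hence to $x_2 \in 3V\setminus V$, one cannot invoke the four-point H\"older estimate through the center $c_V$. Instead I would use the cancellation of $h_{I_1}$ only and apply the mixed H\"older/size estimate $|K(x,y) - K(x,(c_{I_1}, y_2))| \lesssim \ell(I_1)^{\delta} |x_1-c_{I_1}|^{-n-\delta}|x_2-y_2|^{-m}$. The residual double integral $\int_V \int_{3V\setminus V} |x_2-y_2|^{-m}\,dx_2\,dy_2$ has only a logarithmic boundary singularity in $y_2$, and it integrates to $\lesssim |V|$; combined with the Haar normalization $|u_V| \le |V|^{-1/2}$, this produces exactly the $|V|^{1/2}$ factor needed, and no more.
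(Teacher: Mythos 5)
Your proof is correct, and covers every case, but it takes a genuinely different route from the paper's. The paper splits only $1 = \chi_{3V} + \chi_{(3V)^c}$ and then handles the near piece by decomposing $u_V$ over the children $V'$ of $V$: the diagonal pairings $\langle T(h_{I_1}\otimes\chi_{V'}), h_{I_2}\otimes\chi_{V'}\rangle$ are handled via the one-parameter kernel $K_{\chi_{V'},\chi_{V'}}$ with the assumption $C(\chi_{V'},\chi_{V'})\lesssim |V|$, while the off-diagonal pairings $\langle T(h_{I_1}\otimes\chi_{V'}), h_{I_2}\otimes\chi_{3V\setminus V'}\rangle$ use the mixed H\"older and size estimate of the full kernel. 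You instead split $1 = \chi_V + \chi_{3V\setminus V} + \chi_{(3V)^c}$, treat the overlapping $\chi_V$ piece with the one-parameter kernel $K_{u_V,\chi_V}$ directly, and invoke the diagonal hypothesis $C(\tilde u_V,\chi_V)\lesssim|V|$ (via your normalization $\tilde u_V = |V|^{1/2}u_V$, then rescaling by linearity). The annulus and tail pieces you handle essentially as the paper handles its off-diagonal and far pieces. Both arguments are valid under the stated hypotheses. The distinction is worth noting: the paper's route deliberately uses only $C(\chi_V,\chi_V)$ for this lemma, with the consequence (pointed out in the Remark following Lemma \ref{firstbmo}) that the upgrade from Lemma \ref{normali} to the BMO estimate of Lemma \ref{firstbmo} is precisely where $C(u_V,\chi_V)$ becomes indispensable. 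Your version already deploys $C(u_V,\chi_V)$ here, which makes the two lemmas more uniform in style but blurs the paper's observation about which hypothesis is the genuinely new ingredient at the BMO stage.
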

\begin{proof}
Let us first split
\begin{displaymath}
\langle T(h_{I_1} \otimes u_{V}), h_{I_2} \otimes 1\rangle =  \langle T(h_{I_1} \otimes u_{V}), h_{I_2} \otimes  \chi_{3V} \rangle + \langle T(h_{I_1} \otimes u_{V}), h_{I_2} \otimes \chi_{(3V)^c}\rangle.
\end{displaymath}
We have
\begin{align*}
|\langle T(h_{I_1} \otimes u_{V}), h_{I_2} \otimes \chi_{3V} \rangle| \le |V|^{-1/2} \sum_{V' \in \textup{ch}(V)} \Big[ |\langle T(h_{I_1} &\otimes \chi_{V'}), h_{I_2} \otimes \chi_{3V \setminus V'} \rangle| \\
&+  |\langle T(h_{I_1} \otimes \chi_{V'}), h_{I_2} \otimes \chi_{V'} \rangle|\Big]
\end{align*}
For the first time, we use the kernel representations in $\R^n$ to write $\langle T(h_{I_1} \otimes \chi_{V'}), h_{I_2} \otimes \chi_{V'} \rangle$ in the form
\begin{displaymath}
\int_{I_1} \int_{I_2} [K_{\chi_{V'}, \chi_{V'}}(x_1, y_1)-K_{\chi_{V'}, \chi_{V'}}(x_1, c_{I_1})]   h_{I_1}(y_1)h_{I_2}(x_1)\,dx_1\,dy_1.
\end{displaymath}
This gives that
\begin{align*}
|\langle T(h_{I_1} \otimes \chi_{V'}), h_{I_2} \otimes \chi_{V'} \rangle| &\le C(\chi_{V'}, \chi_{V'}) \frac{|I_1|^{1/2}|I_2|^{1/2}}{|K|}\Big(\frac{\ell(I_1)}{\ell(K)}\Big)^{\delta/2} \\
&\lesssim |V|\frac{|I_1|^{1/2}|I_2|^{1/2}}{|K|}\Big(\frac{\ell(I_1)}{\ell(K)}\Big)^{\delta/2}.
\end{align*}
Notice that by the mixed H\"older and size estimates for $K$ we have the same bound also for the term $ |\langle T(h_{I_1} \otimes \chi_{V'}), h_{I_2} \otimes \chi_{3V \setminus V'} \rangle|$, and
so there holds
\begin{displaymath}
|\langle T(h_{I_1} \otimes u_{V}), h_{I_2} \otimes \chi_{3V} \rangle| \lesssim \frac{|I_1|^{1/2}|I_2|^{1/2}}{|K|}\Big(\frac{\ell(I_1)}{\ell(K)}\Big)^{\delta/2} |V|^{1/2}.
\end{displaymath}
The term  $\langle T(h_{I_1} \otimes u_{V}), h_{I_2} \otimes \chi_{(3V)^c}\rangle$ is in control by the full kernel representation and the H\"older estimate for $K$.
\end{proof}
These are non-cancellative shifts so we must separately demonstrate the $L^2$ boundedness. For this, we prefer
to write things in a different way:
\begin{align*}
&\sum_V \langle \langle g, h_{I_2}\rangle_1 \rangle_{V}   \langle T(h_{I_1} \otimes u_{V}), h_{I_2} \otimes 1\rangle \langle f, h_{I_1} \otimes u_{V}\rangle \\
&= \sum_V \langle \langle g, h_{I_2}\rangle_1 \rangle_{V} \langle \langle T^*(h_{I_2} \otimes 1), h_{I_1}\rangle_1, u_V\rangle \langle \langle f, h_{I_1} \rangle_1, u_V\rangle \\
&= C2^{-i_1\delta/2}\Big\langle \langle f, h_{I_1} \rangle_1, \sum_V \langle \langle g, h_{I_2}\rangle_1 \rangle_{V} \langle b_{I_1I_2}, u_V\rangle u_V \Big\rangle \\
&= C2^{-i_1\delta/2}\langle \langle f, h_{I_1} \rangle_1, \Pi_{b_{I_1I_2}}(\langle g, h_{I_2}\rangle_1) \rangle \\
&= C2^{-i_1\delta/2}\langle \Pi_{b_{I_1I_2}}^*(\langle f, h_{I_1} \rangle_1), \langle g, h_{I_2}\rangle_1 \rangle \\
&= C2^{-i_1\delta/2}\langle h_{I_2} \otimes \Pi_{b_{I_1I_2}}^*(\langle f, h_{I_1} \rangle_1), g\rangle,
\end{align*}
where $b_{I_1I_2} = \langle T^*(h_{I_2} \otimes 1), h_{I_1}\rangle_1 /(C2^{-i_1\delta/2})$ and $\Pi_{b_{I_1I_2}}$ is the related paraproduct on $\R^m$ defined by the general formula
\begin{displaymath}
\Pi_{b}a = \sum_V \langle a \rangle_V \langle b, u_V\rangle u_V.
\end{displaymath}
\begin{lem}\label{firstbmo}
We have $b_{I_1I_2} \in \textup{BMO}(\R^m)$ with the bound
\begin{displaymath}
\|b_{I_1I_2}\|_{ \textup{BMO}(\R^m)} \le c\frac{|I_1|^{1/2}|I_2|^{1/2}}{|K|}.
\end{displaymath}
\end{lem}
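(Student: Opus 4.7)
The plan is to use $H^1$--BMO duality on $\R^m$. Proving the claimed bound is equivalent to showing that for every cube $V \subset \R^m$ and every zero-mean $V$-adapted function $b_V$ one has
\[
|\langle b_{I_1I_2}, b_V\rangle| \lesssim \frac{|I_1|^{1/2}|I_2|^{1/2}}{|K|}|V|.
\]
Unwinding the definition of $b_{I_1I_2}$ and using the duality of $T$ and $T^*$, this reduces to
\[
|\langle T(h_{I_1}\otimes b_V), h_{I_2}\otimes 1\rangle| \lesssim 2^{-i_1\delta/2}\,\frac{|I_1|^{1/2}|I_2|^{1/2}}{|K|}\,|V|,
\]
which is the zero-mean, $L^\infty$-normalized analog of Lemma~\ref{normali}.

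I would then proceed with the same three-piece split $1 = \chi_V + \chi_{3V\setminus V} + \chi_{(3V)^c}$ in the $\R^m$ direction that was used in the proof of Lemma~\ref{normali}. For the on-diagonal piece $\langle T(h_{I_1}\otimes b_V), h_{I_2}\otimes \chi_V\rangle$, the one-parameter kernel representation in $\R^n$ applies and yields the expression
\[
\int\!\!\int K_{b_V,\chi_V}(x_1,y_1)\,h_{I_1}(y_1)\,h_{I_2}(x_1)\,dx_1\,dy_1;
\]
the $\R^n$ H\"older condition on $K_{b_V,\chi_V}$ combined with the cancellation of $h_{I_1}$ (and the separation/goodness of $I_1$, $I_2$), together with the structural bound $C(b_V,\chi_V)\lesssim |V|$, then yields the desired estimate. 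On $3V\setminus V$, the $\R^m$-supports of $b_V$ and $\chi_{3V\setminus V}$ are disjoint, so the full bi-parameter kernel representation applies; the mixed H\"older/size condition on $K$ (cancellation in $y_1$ only) combined with the standard estimate $\int_V|b_V|\,dy_2\int_{3V\setminus V}|x_2-y_2|^{-m}\,dx_2\lesssim |V|$ handles this piece. On $(3V)^c$, both cancellations are available, so the four-fold H\"older condition on $K$ applies (the conditions $|y_1-c_{I_1}|\le|x_1-y_1|/2$ and $|y_2-c_V|\le|x_2-y_2|/2$ are both met), and the $\R^m$-integral $\int_V|b_V|\int_{(3V)^c}\ell(V)^\delta|x_2-c_V|^{-m-\delta}\,dx_2\lesssim |V|$ closes the estimate. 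In all three pieces the $\R^n$-factor contributes the usual $|I_1|^{1/2}|I_2|^{1/2}|K|^{-1}\,2^{-i_1\delta/2}$ via $d(I_1,I_2)\gtrsim \ell(I_1)^{\gamma_n}\ell(K)^{1-\gamma_n}$ and the identity $\gamma_n n+\gamma_n\delta=\delta/2$.

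The main obstacle is conceptual rather than computational: $T^*(h_{I_2}\otimes 1)$ is not a function on $\R^{n+m}$, and the pairing $\langle T^*(h_{I_2}\otimes 1), h_{I_1}\otimes b_V\rangle$ must actually be \emph{defined} by the three-piece split above. The cancellation of $h_{I_1}$ and $b_V$ in both variables is exactly what is needed for the $(3V)^c$ integral to converge absolutely, while on $3V$ the compact support of the test function together with the one-parameter kernel assumption rescues the otherwise singular diagonal contribution. Once this interpretation is fixed, the bookkeeping mimics Lemma~\ref{normali}; notably, the product BMO assumption on $T^*1$ is \emph{not} invoked for this one-parameter BMO estimate.
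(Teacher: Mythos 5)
Your proposal is correct and follows essentially the same route as the paper, which disposes of this lemma in two lines: reduce by the zero-mean $L^\infty$-normalized-atom characterization of $\textup{BMO}(\R^m)$ to bounding $|\langle T(h_{I_1}\otimes a), h_{I_2}\otimes 1\rangle|$ for $V$-adapted zero-mean $a$, and then ``split $1 = \chi_{3V} + \chi_{(3V)^c}$ and use kernel estimates in a similar fashion as before.'' Your three-piece split $1 = \chi_V + \chi_{3V\setminus V} + \chi_{(3V)^c}$ is the natural refinement the paper has in mind, since the diagonal piece $\langle T(h_{I_1}\otimes a), h_{I_2}\otimes \chi_V\rangle$ must be handled by the one-parameter kernel $K_{a,\chi_V}$ on $\R^n$ together with the assumed bound $C(u_V,\chi_V)\lesssim |V|$, while the remaining pieces use the mixed and full H\"older conditions for $K$ exactly as in Lemma~\ref{normali}. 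You also correctly identify the key structural point that the paper highlights in its remark after this lemma: it is precisely the assumed control $C(u_V,\chi_V)\lesssim |V|$ (not just $C(\chi_V,\chi_V)\lesssim |V|$) that makes this BMO upgrade possible, and the product BMO assumption on $T^*1$ plays no role here.
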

\begin{proof}
Let $V$ be any cube in $\R^m$ and $a$ be any function in $\R^m$ such that spt$\,a \subset V$, $|a| \le 1$ and $\int a = 0$. It suffices to show that
\begin{displaymath}
|\langle T(h_{I_1} \otimes a), h_{I_2} \otimes 1\rangle| \lesssim \frac{|I_1|^{1/2}|I_2|^{1/2}}{|K|} \Big(\frac{\ell(I_1)}{\ell(K)}\Big)^{\delta/2}|V|.
\end{displaymath}
This is done by splitting $1 = \chi_{3V} + \chi_{(3V)^c}$ and using kernel estimates in a similar fashion as before.
\end{proof}
\begin{rem}
The strengthening of  Lemma \ref{normali} to the related BMO estimate of Lemma \ref{firstbmo}
requires one to have the control $C(u_V, \chi_V) \le C|V|$ for $V$-adapted functions $u_V$ with zero-mean. It is precisely for these type of BMO reasons that merely the assumption $C(\chi_V, \chi_V) \le C|V|$ does not seem to be enough
for the results of this paper.
\end{rem}
Let us abbreviate
\begin{displaymath}
\mathop{\sum_{I_1,\, I_2 \subset K}}_{\ell(I_1) = 2^{-i_1}\ell(K),\, \ell(I_2) = 2^{-i_2}\ell(K)} = \sum_{I_1,\, I_2 \subset K}^{(i_1,i_2)}.
\end{displaymath}
We are ready to show the boundedness of our non-cancellative shifts of type $(i_1,i_2,0,0)$.
\begin{prop}\label{singlepara}
There holds
\begin{displaymath}
\Big\|\sum_{K} \sum_{I_1,\, I_2 \subset K}^{(i_1,i_2)}  h_{I_2} \otimes \Pi_{b_{I_1I_2}}^*(\langle f, h_{I_1} \rangle_1)\Big\|_2 \le \|f\|_2.
\end{displaymath}
\end{prop}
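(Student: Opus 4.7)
The plan is to prove this by testing against $g\in L^2(\R^{n+m})$ and showing $|\langle Pf, g\rangle| \lesssim \|f\|_2\|g\|_2$, where $P$ denotes the operator in the statement. The first move is to exploit the tensor structure on the $\R^n$ side: since
\[
\langle h_{I_2} \otimes \Pi^*_{b_{I_1I_2}}(\langle f, h_{I_1}\rangle_1), g\rangle = \langle \Pi^*_{b_{I_1I_2}}(\langle f, h_{I_1}\rangle_1), \langle g, h_{I_2}\rangle_1\rangle_{L^2(\R^m)},
\]
I move $\Pi^*$ back to the other factor and reduce the pairing to
\[
\langle Pf, g\rangle = \sum_K \sum_{I_1, I_2 \subset K}^{(i_1,i_2)} \langle \langle f, h_{I_1}\rangle_1, \Pi_{b_{I_1I_2}}(\langle g, h_{I_2}\rangle_1)\rangle_{L^2(\R^m)}.
\]
This turns each summand into a one-parameter pairing on $\R^m$ between two Haar-coefficient slices of $f$ and $g$, with a paraproduct inserted.

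Next I apply Cauchy--Schwarz on $L^2(\R^m)$ together with the classical dyadic paraproduct bound $\|\Pi_b\|_{L^2\to L^2} \lesssim \|b\|_{\textup{BMO}(\R^m)}$ (standard $H^1$--BMO duality), combined with the estimate $\|b_{I_1I_2}\|_{\textup{BMO}(\R^m)} \lesssim |I_1|^{1/2}|I_2|^{1/2}/|K|$ from Lemma \ref{firstbmo}. Writing $A_{I_1} = \|\langle f, h_{I_1}\rangle_1\|_{L^2(\R^m)}$ and $B_{I_2} = \|\langle g, h_{I_2}\rangle_1\|_{L^2(\R^m)}$, this gives
\[
|\langle Pf, g\rangle| \lesssim \sum_K \sum_{I_1, I_2 \subset K}^{(i_1,i_2)} \frac{|I_1|^{1/2}|I_2|^{1/2}}{|K|}\, A_{I_1} B_{I_2}.
\]
For fixed $K$ the double sum factors as a product; Cauchy--Schwarz on each factor (using $\sum_{I\subset K}^{(i)}|I|/|K|=1$) bounds it by $\bigl(\sum_{I_1\subset K}^{(i_1)} A_{I_1}^2\bigr)^{1/2}\bigl(\sum_{I_2\subset K}^{(i_2)} B_{I_2}^2\bigr)^{1/2}$. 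A further Cauchy--Schwarz over $K$, combined with the fact that each $I_1$ has a unique $i_1$-th dyadic ancestor $K$, collapses $\sum_K\sum_{I_1\subset K}^{(i_1)} A_{I_1}^2 = \sum_{I_1}A_{I_1}^2 = \|f\|_2^2$ by Plancherel, and similarly for $g$, yielding the desired estimate.

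The only substantive point is that one must use the BMO strengthening of Lemma \ref{firstbmo} rather than the pointwise coefficient bound of Lemma \ref{normali}; the paraproduct $\Pi_{b_{I_1I_2}}$ genuinely requires BMO control on $b_{I_1I_2}$ to produce the correct $L^2\to L^2$ norm $\lesssim |I_1|^{1/2}|I_2|^{1/2}/|K|$. This is precisely why the full assumption $C(u_V,\chi_V)\le C|V|$ (for zero-mean $V$-adapted $u_V$) was needed rather than just $C(\chi_V,\chi_V)\le C|V|$, as noted in the remark following Lemma \ref{firstbmo}. Granted that input, the rest is orthogonality and Cauchy--Schwarz bookkeeping, with no further bi-parameter complications since the $I_2$-Haar functions on $\R^n$ decouple the problem into a parametrized family of one-parameter paraproduct estimates on $\R^m$.
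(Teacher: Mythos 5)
Your argument is correct and relies on the same key ingredients as the paper's: the BMO normalization of $b_{I_1I_2}$ from Lemma~\ref{firstbmo}, the standard paraproduct bound, Cauchy--Schwarz, and orthogonality of the Haar systems. The only difference is organizational -- you test against $g$ by duality and factor the $I_1,I_2$ double sum for fixed $K$, whereas the paper computes $\|\cdot\|_2^2$ directly via orthogonality of the $h_{I_2}$ and handles the $I_1$-sum through the projection $p_K^{i_1}$ -- so this is essentially the same proof.
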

\begin{proof}
There holds by orthogonality that
\begin{align*}
\Big\|\sum_{K} \sum_{I_1,\, I_2 \subset K}^{(i_1,i_2)}  h_{I_2}& \otimes \Pi_{b_{I_1I_2}}^*(\langle f, h_{I_1} \rangle_1)\Big\|_2^2 \\
&= \sum_K \sum_{I_2 \subset K}^{(i_2)} \Big\| \sum_{I_1 \subset K}^{(i_1)}   \Pi_{b_{I_1I_2}}^*(\langle f, h_{I_1} \rangle_1)\Big\|_2^2 \\
&\le \sum_K \sum_{I_2 \subset K}^{(i_2)} \Big( \sum_{I_1 \subset K}^{(i_1)} \| \Pi_{b_{I_1I_2}}^*(\langle f, h_{I_1} \rangle_1)\|_2 \Big)^2.
\end{align*}
Let $p^{i_1}_K$ be the orthogonal projection from $L^2(\R^n)$ to span$\{h_{I_1}: \, I_1 \subset K,\, \ell(I_1) = 2^{-i_1}\ell(K)\}$. Write also
$f_y(x) = f(x,y)$. There holds by the boundedness of paraproducts defined by BMO functions and the previous lemma that
\begin{align*}
\| \Pi_{b_{I_1I_2}}^*(\langle f, h_{I_1} \rangle_1 \|_2 &\le \frac{|I_1|^{1/2}|I_2|^{1/2}}{|K|} \|\langle f, h_{I_1} \rangle_1\|_2 \\
&\le \frac{|I_1|^{1/2}|I_2|^{1/2}}{|K|} \Big( \int_{\R^m} \int_{I_1} |p^{i_1}_K f_y(x)|^2\,dx\,dy\Big)^{1/2}.
\end{align*}
Therefore, we have
\begin{align*}
&\Big\|\sum_{K} \sum_{I_1,\, I_2 \subset K}^{(i_1,i_2)}  h_{I_2} \otimes \Pi_{b_{I_1I_2}}^*(\langle f, h_{I_1} \rangle_1)\Big\|_2^2 \\
&\le \sum_K \frac{1}{|K|} \Big( \sum_{I_1 \subset K}^{(i_1)} |I_1|^{1/2}    \Big( \int_{\R^m} \int_{I_1} |p^{i_1}_K f_y(x)|^2\,dx\,dy\Big)^{1/2}  \Big)^2 \\
&\le \sum_K \frac{1}{|K|} \Big( \sum_{I_1 \subset K}^{(i_1)} |I_1| \Big) \Big( \sum_{I_1 \subset K}^{(i_1)} \int_{\R^m} \int_{I_1} |p^{i_1}_K f_y(x)|^2\,dx\,dy \Big) \\
&\le \sum_K \int_{\R^m} \int_{\R^n}  |p^{i_1}_K f_y(x)|^2\,dx\,dy \\
&= \int_{\R^m} \|f_y\|_2^2\,dy = \|f\|_2^2,
\end{align*}
where we again utilized orthogonality.
\end{proof}
We end this this section by concluding that
\begin{align*}
\mathop{\sum_{\ell(I_1) \le \ell(I_2)}}_{d(I_1,I_2) > \ell(I_1)^{\gamma_n}\ell(I_2)^{1-\gamma_n}}&  \sum_{J_1 \subsetneq J_2} \langle u_{J_2}\rangle_{J_1} \langle T(h_{I_1} \otimes u_{J_1}), h_{I_2} \otimes 1\rangle \langle f, h_{I_1} \otimes u_{J_1}\rangle
\langle g, h_{I_2} \otimes u_{J_2} \rangle \\
&= C\sum_{i_2 \ge 1} \sum_{i_1 \ge i_2} 2^{-i_1\delta/2} \langle S^{i_1i_200}f, g\rangle.
\end{align*}
\section{Separated/equal}
There holds that
\begin{displaymath}
|\langle T(h_{I_1} \otimes u_V), h_{I_2} \otimes u_V\rangle | \lesssim  \frac{|I_1|^{1/2}|I_2|^{1/2}}{|K|}\Big(\frac{\ell(I_1)}{\ell(K)}\Big)^{\delta/2}.
\end{displaymath}
Indeed, to see this, first estimate
\begin{align*}
|\langle T(h_{I_1} \otimes u_V), h_{I_2} \otimes u_V\rangle | \le |V|^{-1} \Big[ \mathop{\sum_{V', V'' \in \textup{ch}(V)}}_{V' \ne V''} & |\langle T(h_{I_1} \otimes \chi_{V'}), h_{I_2} \otimes \chi_{V''} \rangle| \\
&+ \sum_{V' \in \textup{ch}(V)}  |\langle T(h_{I_1} \otimes \chi_{V'}), h_{I_2} \otimes \chi_{V'} \rangle| \Big]. 
\end{align*}
We have by the kernel representation in $\R^n$ that
\begin{align*}
 |\langle T(h_{I_1} \otimes \chi_{V'}), h_{I_2} \otimes \chi_{V'} \rangle| \le C(\chi_{V'}, \chi_{V'}) &\frac{|I_1|^{1/2}|I_2|^{1/2}}{|K|}\Big(\frac{\ell(I_1)}{\ell(K)}\Big)^{\delta/2} \\ 
&\lesssim |V| \frac{|I_1|^{1/2}|I_2|^{1/2}}{|K|}\Big(\frac{\ell(I_1)}{\ell(K)}\Big)^{\delta/2}.
\end{align*}
For $V' \ne V''$ the estimate
\begin{displaymath}
|\langle T(h_{I_1} \otimes \chi_{V'}), h_{I_2} \otimes \chi_{V''} \rangle| \lesssim |V| \frac{|I_1|^{1/2}|I_2|^{1/2}}{|K|}\Big(\frac{\ell(I_1)}{\ell(K)}\Big)^{\delta/2}
\end{displaymath}
follows from the full kernel representation using the mixed H\"older and size estimate of $K$.

We may thus immediately write that
\begin{align*}
\mathop{\sum_{\ell(I_1) \le \ell(I_2)}}_{d(I_1,I_2) > \ell(I_1)^{\gamma_n}\ell(I_2)^{1-\gamma_n}}&  \sum_V \langle T(h_{I_1} \otimes u_V), h_{I_2} \otimes u_V \rangle \langle f, h_{I_1} \otimes u_V \rangle \langle g, h_{I_2} \otimes u_V \rangle \\
&= C\sum_{i_2 \ge 1} \sum_{i_1 \ge i_2} 2^{-i_1\delta/2} \langle S^{i_1i_200}f, g\rangle,
\end{align*}
where in this case $S^{i_1i_200}$ are cancellative shifts.
\section{Separated/nearby}
For the $J_1$ and $J_2$ in the nearby summation it is evident that $V = J_1 \vee J_2$ satisfies $\ell(V) \le 2^r\ell(J_1)$. Thus, we may write
\begin{align*}
&\mathop{\sum_{\ell(I_1) \le \ell(I_2)}}_{d(I_1,I_2) > \ell(I_1)^{\gamma_n}\ell(I_2)^{1-\gamma_n}} \mathop{\mathop{\sum_{\ell(J_1) \le \ell(J_2)}}_{d(J_1,J_2) \le \ell(J_1)^{\gamma_m}\ell(J_2)^{1-\gamma_m}}}_{J_1 \cap J_2 = \emptyset} \\
&= \sum_{i_2 \ge 1} \sum_{i_1 \ge i_2} \sum_{j_1 = 1}^r \sum_{j_2=1}^{j_1} \sum_K \sum_V 
\mathop{\mathop{\sum_{d(I_1,I_2) > \ell(I_1)^{\gamma_n}\ell(I_2)^{1-\gamma_n}}}_{I_1 \vee I_2 = K}}_{\ell(I_1) = 2^{-i_1}\ell(K), \,\ell(I_2) = 2^{-i_2}\ell(K)}
\mathop{\mathop{\sum_{d(J_1,J_2) \le \ell(J_1)^{\gamma_m}\ell(J_2)^{1-\gamma_m}, \, J_1 \cap J_2 = \emptyset}}_{J_1 \vee J_2 = V}}_{\ell(J_1) = 2^{-j_1}\ell(V), \,\ell(J_2) = 2^{-j_2}\ell(V)}.
\end{align*}
It is easy to get the required estimate
\begin{displaymath}
|\langle T(h_{I_1} \otimes u_{J_1}), h_{I_2} \otimes u_{J_2}\rangle | \lesssim  \frac{|I_1|^{1/2}|I_2|^{1/2}}{|K|}\Big(\frac{\ell(I_1)}{\ell(K)}\Big)^{\delta/2}
\end{displaymath}
by using the full kernel representation and the mixed H\"older and size estimate of $K$. Therefore, we are able to realize this part in the form
\begin{align*}
 C \sum_{i_2 \ge 1} \sum_{i_1 \ge i_2} \sum_{j_1 = 1}^r \sum_{j_2=1}^{j_1} 2^{-i_1\delta/2}2^{-j_1\delta/2} \langle S^{i_1i_2j_1j_2}f, g\rangle.
\end{align*}
\section{Inside/Inside}
We decompose
\begin{align*}
\langle T(h_{I_1} \otimes u_{J_1}), h_{I_2} \otimes u_{J_2}\rangle &= \langle T(h_{I_1} \otimes u_{J_1}), s_{I_2I_2} \otimes s_{J_1J_2}\rangle \\
&+ \langle u_{J_2}\rangle_{J_1} \langle T(h_{I_1} \otimes u_{J_1}), s_{I_2I_2} \otimes 1 \rangle \\
&+ \langle h_{I_2}\rangle_{I_1} \langle T(h_{I_1} \otimes u_{J_1}), 1 \otimes s_{J_1J_2} \rangle \\
&+  \langle h_{I_2}\rangle_{I_1} \langle u_{J_2}\rangle_{J_1} \langle T(h_{I_1} \otimes u_{J_1}), 1 \rangle,
\end{align*}
where $s_{I_1I_2} = \chi_{I_{2,1}^c}(h_{I_2} - \langle h_{I_2}\rangle_{I_{2,1}})$ and $s_{J_1J_2} = \chi_{J_{2,1}^c}[u_{J_2} - \langle u_{J_2} \rangle_{J_{2,1}}]$. The relevant properties are
spt$\,s_{I_1I_2} \subset I_{2,1}^c$,  spt$\,s_{J_1J_2} \subset J_{2,1}^c$, $|s_{I_1I_2}| \le 2 |I_2|^{-1/2}$ and $|s_{J_1J_2}| \le 2|J_2|^{-1/2}$.
\begin{lem}
There holds
\begin{displaymath}
|\langle T(h_{I_1} \otimes u_{J_1}), s_{I_2I_2} \otimes s_{J_1J_2}\rangle| \lesssim \frac{|I_1|^{1/2}}{|I_2|^{1/2}} \Big( \frac{\ell(I_1)}{\ell(I_2)} \Big)^{\delta/2} \frac{|J_1|^{1/2}}{|J_2|^{1/2}} \Big( \frac{\ell(J_1)}{\ell(J_2)} \Big)^{\delta/2}.
\end{displaymath}
\end{lem}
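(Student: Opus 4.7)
The plan is to imitate the proof of Lemma \ref{sep,in}, now using cancellation in \emph{both} parameters at once. Since $I_1\subset I_{2,1}$ is disjoint from $\textrm{spt}\, s_{I_1I_2}\subset I_{2,1}^c$ and likewise for the $J$-pair, the full kernel representation applies. Using $\int h_{I_1}=0=\int u_{J_1}$ I would replace $K(x,y)$ by the four-point difference
\[
K(x,y)-K(x,(c_{I_1},y_2))-K(x,(y_1,c_{J_1}))+K(x,(c_{I_1},c_{J_1})),
\]
and then split the argument on whether goodness yields the separations $\ell(I_1)<2^{-r}\ell(I_2)$ and $\ell(J_1)<2^{-r}\ell(J_2)$.

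In the main case both separations hold, so by goodness $d(I_1,I_{2,1}^c)\gtrsim\ell(I_1)^{\gamma_n}\ell(I_2)^{1-\gamma_n}$ and analogously for $J$. Hence $|y_1-c_{I_1}|\le\ell(I_1)/2\le|x_1-c_{I_1}|/2$ on the domain of integration (and similarly in the second variable), so the four-point H\"older estimate of $K$ bounds the kernel difference by $\ell(I_1)^\delta|x_1-c_{I_1}|^{-n-\delta}\cdot\ell(J_1)^\delta|x_2-c_{J_1}|^{-m-\delta}$. Integrating in $y$ gives $|I_1||J_1|$, while integrating the singular factors in $x$ over $I_{2,1}^c$ and $J_{2,1}^c$ contributes $d(I_1,I_{2,1}^c)^{-\delta}$ and $d(J_1,J_{2,1}^c)^{-\delta}$ respectively. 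Combining with the $L^\infty$ bounds $|h_{I_1}|\le|I_1|^{-1/2}$, $|u_{J_1}|\le|J_1|^{-1/2}$, $|s_{I_1I_2}|\le 2|I_2|^{-1/2}$, $|s_{J_1J_2}|\le 2|J_2|^{-1/2}$ and the elementary estimate $\ell(I_1)^\delta d(I_1,I_{2,1}^c)^{-\delta}\le(\ell(I_1)/\ell(I_2))^{\delta/2}$ (which uses $\gamma_n<1/2$ and $\ell(I_1)\le\ell(I_2)$, just as in the separated/inside proof), together with its $J$-analogue, produces the desired bound.

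In the remaining cases at least one of the two goodness separations fails. If it is the $J$-separation, then $\ell(J_1)\ge 2^{-r}\ell(J_2)$ and so the factor $(\ell(J_1)/\ell(J_2))^{\delta/2}$ is automatic up to constants. Following the second half of the proof of Lemma \ref{sep,in} I would split $s_{J_1J_2}=\chi_{3J_1}s_{J_1J_2}+\chi_{(3J_1)^c}s_{J_1J_2}$: for the far piece one has $|x_2-c_{J_1}|\gtrsim\ell(J_1)$ and the four-point H\"older is used as in the main case, while for the near piece one drops the $u_{J_1}$-cancellation and uses the mixed H\"older/size estimate for $K$ (H\"older in the first variable, size in the second), reducing the $x_2,y_2$-integration to $\int_{J_1}\int_{3J_1}|x_2-y_2|^{-m}\,dx_2\,dy_2\lesssim|J_1|$. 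The symmetric splitting handles a failing $I$-separation, and combining both splittings disposes of the case where neither goodness separation is available. The main obstacle is the bookkeeping of the (up to four) sub-cases that arise from combining the near/far splits in both parameters when both separations fail: one must identify the right kernel estimate (full four-point H\"older, mixed in one variable, or doubly mixed H\"older/size) for each combination, and check that each degenerate integration remains finite, so that the absence of genuine $\delta/2$-decay in those sub-cases is absorbed harmlessly by $\ell(I_1)\sim\ell(I_2)$ and $\ell(J_1)\sim\ell(J_2)$.
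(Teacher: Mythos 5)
Your argument reproduces the paper's proof: split on whether the goodness separations $\ell(I_1)<2^{-r}\ell(I_2)$ and $\ell(J_1)<2^{-r}\ell(J_2)$ hold, use the four-point H\"older estimate of $K$ when both do, and otherwise apply the $\chi_{3I_1}/\chi_{(3I_1)^c}$ and $\chi_{3J_1}/\chi_{(3J_1)^c}$ near/far split to $s_{I_1I_2}$ and $s_{J_1J_2}$. The four sub-cases you flag at the end are exactly the four terms in the paper's decomposition, treated there respectively by the size estimate (near/near), the two mixed size/H\"older estimates (near/far and far/near), and the full H\"older estimate (far/far).
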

\begin{proof}
In the case that $\ell(I_1) < 2^{-r}\ell(I_2)$ and $\ell(J_1) < 2^{-r}\ell(J_2)$ one may use the H\"older estimate of $K$.
In the case $2^{-r}\ell(I_2) \le \ell(I_1) (\le \ell(I_2))$ and $2^{-r}\ell(J_2) \le \ell(J_1) (\le \ell(J_2))$ one splits
\begin{align*}
\langle T(h_{I_1} \otimes u_{J_1}), s_{I_2I_2} \otimes s_{J_1J_2}\rangle &= \langle T(h_{I_1} \otimes u_{J_1}), (\chi_{3I_1}s_{I_2I_2}) \otimes (\chi_{3J_1}s_{J_1J_2})\rangle \\
&+ \langle T(h_{I_1} \otimes u_{J_1}), (\chi_{3I_1}s_{I_2I_2}) \otimes (\chi_{(3J_1)^c}s_{J_1J_2})\rangle \\
&+ \langle T(h_{I_1} \otimes u_{J_1}), (\chi_{(3I_1)^c}s_{I_2I_2}) \otimes (\chi_{3J_1}s_{J_1J_2})\rangle \\
&+ \langle T(h_{I_1} \otimes u_{J_1}), (\chi_{(3I_1)^c}s_{I_2I_2}) \otimes (\chi_{(3J_1)^c}s_{J_1J_2})\rangle.
\end{align*}
The first term is controlled by the size estimate of the full kernel:
\begin{align*}
&|\langle T(h_{I_1} \otimes u_{J_1}), (\chi_{3I_1}s_{I_2I_2}) \otimes (\chi_{3J_1}s_{J_1J_2})\rangle| \\
&\le |I_1|^{-1/2} |I_2|^{-1/2} \int_{I_1} \int_{3I_1 \setminus I_1} \frac{dx_1\,dy_1}{|x_1-y_1|^n} \cdot |J_1|^{-1/2}|J_2|^{-1/2} \int_{J_1} \int_{3J_1 \setminus J_1} \frac{dx_2\,dy_2}{|x_2-y_2|^m} \\
&\lesssim \frac{|I_1|^{1/2}}{|I_2|^{1/2}} \frac{|J_1|^{1/2}}{|J_2|^{1/2}} \lesssim \frac{|I_1|^{1/2}}{|I_2|^{1/2}} \Big( \frac{\ell(I_1)}{\ell(I_2)} \Big)^{\delta/2} \frac{|J_1|^{1/2}}{|J_2|^{1/2}} \Big( \frac{\ell(J_1)}{\ell(J_2)} \Big)^{\delta/2}.
\end{align*}
The two terms after that are controlled using the mixed size and Hölder estimates of $K$. The last term is controlled using the H\"older estimate of $K$.
The mixed cases where $2^{-r}\ell(I_2) \le \ell(I_1) (\le \ell(I_2))$ and $\ell(J_1) < 2^{-r}\ell(J_2)$ or $\ell(I_1) < 2^{-r}\ell(I_2)$ and $2^{-r}\ell(J_2) \le \ell(J_1) (\le \ell(J_2))$ are handled similarly.
\end{proof}

The above lemma shows that
\begin{displaymath}
\sum_{I_1 \subsetneq I_2} \sum_{J_1 \subsetneq J_2} \langle T(h_{I_1} \otimes u_{J_1}), s_{I_2I_2} \otimes s_{J_1J_2}\rangle \langle f, h_{I_1} \otimes u_{J_1}\rangle \langle g, h_{I_2} \otimes u_{J_2}\rangle 
\end{displaymath}
can be realized in the form
\begin{displaymath}
C\sum_{i_1 = 1}^{\infty} \sum_{j_1 = 1}^{\infty} 2^{-i_1\delta/2} 2^{-j_1\delta/2} \langle S^{i_10j_10}f,g\rangle.
\end{displaymath}

The part
\begin{displaymath}
\sum_{I_1 \subsetneq I_2} \sum_{J_1 \subsetneq J_2} \langle u_{J_2}\rangle_{J_1} \langle T(h_{I_1} \otimes u_{J_1}), s_{I_2I_2} \otimes 1 \rangle  \langle f, h_{I_1} \otimes u_{J_1}\rangle \langle g, h_{I_2} \otimes u_{J_2}\rangle 
\end{displaymath}
can be written in the form
\begin{displaymath}
C\sum_{i_1 = 1}^{\infty} 2^{-i_1\delta/2} \langle S^{i_1000}f,g\rangle,
\end{displaymath}
where
\begin{displaymath}
S^{i_1000}f = \sum_K \mathop{\sum_{I_1 \subset K}}_{\ell(I_1) = 2^{-i_1}\ell(K)} h_K \otimes \Pi^*_{b_{I_1K}}(\langle f, h_{I_1} \rangle_1)
\end{displaymath}
and $b_{I_1K} = \langle T^*(s_{I_1K} \otimes 1), h_{I_1}\rangle_1 / C2^{-i_1\delta/2}$. Since one can check $\|b_{I_1K}\|_{\textup{BMO}(\R^m)} \le c|I_1|^{1/2}/|K|^{1/2}$, it is similarly as has already been
done in the separated/inside case seen that $\|S^{i_1000}f\|_2 \le \|f\|_2$. The proof of the BMO estimate is similar to the proof of the previous lemma.

Completely analogously one can write
\begin{displaymath}
\sum_{I_1 \subsetneq I_2} \sum_{J_1 \subsetneq J_2} \langle h_{I_2}\rangle_{I_1} \langle T(h_{I_1} \otimes u_{J_1}), 1 \otimes s_{J_1J_2} \rangle  \langle f, h_{I_1} \otimes u_{J_1}\rangle \langle g, h_{I_2} \otimes u_{J_2}\rangle 
\end{displaymath}
in the form
\begin{displaymath}
C\sum_{j_1 = 1}^{\infty} 2^{-j_1\delta/2} \langle S^{00j_10}f,g\rangle,
\end{displaymath}
where $S^{00j_10}$ is a non-cancellative $L^2$ bounded shift.

The last part
\begin{displaymath}
\sum_{I_1 \subsetneq I_2} \sum_{J_1 \subsetneq J_2} \langle h_{I_2}\rangle_{I_1} \langle u_{J_2}\rangle_{J_1} \langle T(h_{I_1} \otimes u_{J_1}), 1 \rangle \langle f, h_{I_1} \otimes u_{J_1}\rangle \langle g, h_{I_2} \otimes u_{J_2}\rangle 
\end{displaymath}
collapses to
\begin{align*}
\sum_{K,V} \langle g \rangle_{K \times V} \langle T^*1, h_K \otimes u_V \rangle \langle f, h_K \otimes u_V\rangle = C\langle \Pi_{T^*1/C}^*f, g\rangle, 
\end{align*}
where
\begin{displaymath}
\Pi_bf = \sum_{K,V} \langle f \rangle_{K \times V} \langle b, h_K \otimes u_V\rangle h_K \otimes u_V
\end{displaymath}
is a bounded shift of the type $(0,0,0,0)$ for b in the product BMO of $\R^n \times \R^m$. So here we can set $S^{0000} = \Pi_{T^*1/C}^*$. Note that the correct normalization for this shift would follow
just from the various kernel estimates and the weak boundedness property.
\begin{rem}\label{different par}
In the proof of this representation theorem there are paraproducts of essentially three different types. We have seen two types already: the full paraproduct
\begin{displaymath}
\Pi_bf = \sum_{K,V} \langle f \rangle_{K \times V} \langle b, h_K \otimes u_V\rangle h_K \otimes u_V
\end{displaymath}
and some half paraproducts, like
\begin{displaymath}
f \mapsto \sum_K \mathop{\sum_{I_1 \subset K}}_{\ell(I_1) = 2^{-i_1}\ell(K)} h_K \otimes \Pi^*_{b_{I_1K}}(\langle f, h_{I_1} \rangle_1),
\end{displaymath}
which have a paraproduct part only in the $\R^n$ or $\R^m$ variable. The third type of paraproduct does not surface in our current sum, where $\ell(I_1) \le \ell(I_2)$ and $\ell(J_1) \le \ell(J_2)$. However, for example in the mixed
case, where $\ell(I_1) \le \ell(I_2)$ and $\ell(J_1) > \ell(J_2)$, one has in the corresponding inside/inside part the mixed full paraproduct
\begin{align*}
f &\mapsto \sum_{K,V} |K \times V|^{-1} \langle T_1(1), h_K \otimes u_V\rangle \langle f, h_K \otimes \chi_V\rangle \chi_K \otimes u_V \\
&= \sum_{K,V} \langle T_1(1), h_K \otimes u_V\rangle \langle f, h_K \otimes u_V^2 \rangle h_K^2 \otimes u_V,
\end{align*} 
which is $L^2$ bounded as $T_1(1)$ belongs to the product BMO of $\R^n \times \R^m$ by assumption. It is rather straightforward and well-known that both of these full paraproducts are bounded on $L^2$
if they are defined by functions in the dyadic product BMO. This can be proven by duality -- see for example \cite{PV}.
\end{rem}
\section{Inside/equal}
One splits
\begin{align*}
\langle T(h_{I_1} \otimes u_V), h_{I_2} \otimes u_V\rangle &= \langle T(h_{I_1} \otimes u_V), s_{I_1I_2} \otimes u_V\rangle + 
\langle h_{I_2} \rangle_{I_1} \langle T(h_{I_1} \otimes u_V), 1 \otimes u_V\rangle,
\end{align*}
where $s_{I_1I_2} = \chi_{I_{2,1}^c}(h_{I_2} - \langle h_{I_2}\rangle_{I_{2,1}})$ satisfies spt$\,s_{I_1I_2} \subset I_{2,1}^c$ and $|s_{I_1I_2}| \le 2 |I_2|^{-1/2}$.

One may write
\begin{displaymath}
\sum_{I_1 \subsetneq I_2} \sum_V \langle T(h_{I_1} \otimes u_V), s_{I_1I_2} \otimes u_V\rangle \langle f, h_{I_1} \otimes u_V\rangle \langle g, h_{I_2} \otimes u_V\rangle
\end{displaymath}
in the form
\begin{displaymath}
C \sum_{i_1=1}^{\infty} 2^{-i_1\delta/2} \langle S^{i_1000}f, g\rangle
\end{displaymath}
with cancellative shifts.
For this one needs that
\begin{displaymath}
|\langle T(h_{I_1} \otimes u_V), s_{I_1I_2} \otimes u_V\rangle| \lesssim \frac{|I_1|^{1/2}}{|I_2|^{1/2}} \Big( \frac{\ell(I_1)}{\ell(I_2)} \Big)^{\delta/2}.
\end{displaymath}
Estimate
\begin{align*}
|\langle T(h_{I_1} \otimes u_V), s_{I_1I_2} \otimes u_V\rangle| \le |V|^{-1}\Big[ &\mathop{\sum_{V', V'' \in \textup{ch}(V)}}_{V' \ne V''} |\langle T(h_{I_1} \otimes \chi_{V'}), s_{I_1I_2} \otimes \chi_{V''}\rangle| \\
&+ \sum_{V' \in \textup{ch}(V)} |\langle T(h_{I_1} \otimes \chi_{V'}), s_{I_1I_2} \otimes \chi_{V'}\rangle|\Big].
\end{align*}
In the case $V' \ne V''$ use the full kernel representation. In the diagonal case use the kernel representation in $\R^n$.
If $\ell(I_1) < 2^{-r}\ell(I_2)$, use the mixed size and H\"older estimate of $K$ (in the case $V' \ne V''$) or the H\"older estimate for the kernel $K_{\chi_{V'}, \chi_{V'}}$ (in the case $V' = V''$).
In the case $2^{-r}\ell(I_2) \le \ell(I_1)$ split $s_{I_1I_2} = \chi_{3I_1}s_{I_1I_2} + \chi_{(3I_1)^c}s_{I_1I_2}$. For $V' \ne V''$ use the size estimate of $K$ for the first term
and the mixed size and H\"older estimate of $K$ for the second term. In the case $V' = V''$ use the size estimate of $K_{\chi_{V'}, \chi_{V'}}$ for the first term, and the
H\"older estimate of $K_{\chi_{V'}, \chi_{V'}}$ for the second term.

One writes
\begin{displaymath}
\sum_{I_1 \subsetneq I_2} \sum_V\langle h_{I_2} \rangle_{I_1} \langle T(h_{I_1} \otimes u_V), 1 \otimes u_V\rangle \langle f, h_{I_1} \otimes u_V\rangle \langle g, h_{I_2} \otimes u_V\rangle
\end{displaymath}
in the form
\begin{displaymath}
C\langle S^{0000}f,g\rangle,
\end{displaymath}
where in this case
\begin{align*}
S^{0000}f &= \sum_V \Pi_{b_V}^*(\langle f, u_V\rangle_2) \otimes u_V
\end{align*}
and $b_V = \langle T^*(1 \otimes u_V), u_V\rangle_2/C$. This is indeed a non-cancellative shift of the type $(0,0,0,0)$.
\begin{lem}
There holds $\|b_V\|_{\textup{BMO}(\R^n)} \le c$.
\end{lem}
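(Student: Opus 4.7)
The plan is to verify the BMO condition via its atomic characterization: it suffices to show that for every cube $K\subset\R^n$ and every $K$-adapted zero-mean function $a_K$ with $|a_K|\le 1$ one has
\begin{displaymath}
|\langle b_V, a_K\rangle|= \frac{1}{C}|\langle T(a_K\otimes u_V), 1\otimes u_V\rangle| \lesssim |K|.
\end{displaymath}
Following the recipe indicated at the end of Lemma \ref{firstbmo}, I would split $1=\chi_{3K}(x_1)+\chi_{(3K)^c}(x_1)$ in the $x_1$-variable and treat the far and near parts separately.

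For the far part $\chi_{(3K)^c}$ the $x_1$-supports are disjoint from $K$, so the $\R^n$ kernel representation with kernel $K_{u_V,u_V}$ applies. Exploiting the zero mean of $a_K$ through the standard H\"older subtraction then yields the bound, provided one has a $V$-independent size and H\"older constant for $K_{u_V,u_V}$. The assumptions give $C(\chi_V,\chi_V), C(\chi_V,u_V), C(u_V,\chi_V)\lesssim |V|$ but not $C(u_V,u_V)$ directly, so I bridge this gap by expanding $u_V=|V|^{-1/2}\sum_{V'\in\textup{ch}(V)}\epsilon_{V'}\chi_{V'}$ and using bilinearity of $(f_2,g_2)\mapsto K_{f_2,g_2}$: the diagonal terms $V'=V''$ are controlled by the $\R^n$ Calder\'on--Zygmund structure with constant $\lesssim |V|$, while for $V'\ne V''$ one uses the full kernel representation (legal since $V'\cap V''=\emptyset$) with the size and mixed size/H\"older estimates of $K$ combined with the direct integral bound $\int_{V'}\int_{V''}|x_2-y_2|^{-m}\,dx_2\,dy_2\lesssim |V|$. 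The $|V|^{-1}$ from the Haar normalization cancels the $|V|$-growth, leaving a uniform constant.

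For the near part $\chi_{3K}$, I use the Haar expansion on both slots to reduce to estimating $|V|^{-1}\langle T(a_K\otimes\chi_{V'}),\chi_{3K}\otimes\chi_{V''}\rangle$, where the target bound per term is $\lesssim |K||V|$. When $V'=V''$ I further split $\chi_{3K}=\chi_K+\chi_{3K\setminus K}$: the $\chi_K$-piece is exactly the diagonal BMO assumption (i) applied with the cube $V'$, while for $\chi_{3K\setminus K}$ the $\R^n$-supports are disjoint and I use $K_{\chi_{V'},\chi_{V'}}$ with the cancellation of $a_K$, splitting into a H\"older region and a close region estimated by size (the integrals are finite near the shared boundary). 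When $V'\ne V''$ the $\R^m$-supports are disjoint, so the full kernel representation with the mixed size/H\"older estimate of $K$ gives the $|V|$-factor through the same $\R^m$-integration as above and the $|K|$-factor through the $a_K$-cancellation in $\R^n$.

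The main technical obstacle is the $V$-independent kernel control for $K_{u_V,u_V}$, i.e., bridging the gap between the stated assumptions and the required $C(u_V,u_V)$-type bound; this is what forces the detour through the Haar decomposition into children and the direct $\R^m$ integral computation. Once this is in place, the remainder parallels the proof of Lemma \ref{firstbmo}: one collects finitely many terms, each dominated by $c|K||V|$, and absorbs the $|V|^{-1}$ normalization to obtain the required $\lesssim |K|$ bound, completing the BMO estimate.
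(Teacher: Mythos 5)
Your overall plan (atomic characterization of BMO, the decomposition $1=\chi_K+\chi_{3K\setminus K}+\chi_{(3K)^c}$ in the $x_1$-slot, and the expansion of $u_V$ into children of $V$) is exactly what the paper does, and your treatment of the $\chi_{3K\setminus K}$ and $\chi_{(3K)^c}$ pieces, as well as of the $V'=V''$ diagonal $\chi_K$-piece via assumption (i), is sound. However, there is a genuine gap in the last remaining piece: $\langle T(a_K\otimes\chi_{V'}),\chi_K\otimes\chi_{V''}\rangle$ with $V'\ne V''$.

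You propose to handle this term with ``the full kernel representation with the mixed size/H\"older estimate of $K$,'' getting the $|K|$-factor from the cancellation of $a_K$ in $\R^n$. But the full kernel representation
\begin{displaymath}
\langle T(f_1\otimes f_2), g_1\otimes g_2\rangle = \iint K(x,y)f(y)g(x)\,dx\,dy
\end{displaymath}
is available only when \emph{both} $\textrm{spt}\,f_1\cap\textrm{spt}\,g_1=\emptyset$ and $\textrm{spt}\,f_2\cap\textrm{spt}\,g_2=\emptyset$. Here $f_1=a_K$ and $g_1=\chi_K$ are both supported in $K$, so the first disjointness condition fails and the representation is simply not at your disposal. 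Relatedly, the $a_K$-cancellation trick would need the H\"older bound $|y_1-c_K|\le|x_1-y_1|/2$ to be usable, but $x_1$ and $y_1$ both range over $K$, so there is no separation in the $\R^n$-variable that a size or H\"older estimate in that slot could exploit. The paper resolves this by switching to the \emph{partial} kernel representation on the $\R^m$-side: since the $\R^m$-supports $V'$ and $V''$ are disjoint one writes
\begin{displaymath}
\langle T(a_K\otimes\chi_{V'}),\chi_K\otimes\chi_{V''}\rangle=\int_{V'}\int_{V''}K_{a_K,\chi_K}(x_2,y_2)\,dx_2\,dy_2,
\end{displaymath}
bounds the integrand by $C(a_K,\chi_K)|x_2-y_2|^{-m}$, integrates to get the $|V|$-factor, and invokes the diagonal control $C(a_K,\chi_K)\lesssim|K|$ (valid because $a_K$ is $K$-adapted with zero mean) to get the $|K|$-factor. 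This use of the $K_{f_1,g_1}$ kernel together with the diagonal control assumption is the missing ingredient; without it the $\chi_K$/$V'\ne V''$ term cannot be estimated.
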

\begin{proof}
Fix a cube $K \subset \R^n$ and a function $a$ so that spt$\,a \subset K$, $|a| \le 1$ and $\int a = 0$. We need to show that
$|\langle T(a \otimes u_V), 1 \otimes u_V\rangle| \lesssim |K|$. We begin with the split
\begin{align*}
\langle T(a \otimes u_V), 1 \otimes u_V\rangle = \langle T(a \otimes u_V), \chi_K \otimes u_V\rangle + \langle T(a \otimes & u_V), \chi_{3K \setminus K} \otimes u_V\rangle \\
&+ \langle T(a \otimes u_V), \chi_{(3K)^c} \otimes u_V\rangle.
\end{align*}

There holds
\begin{align*}
\langle T(a \otimes u_V), \chi_{3K \setminus K} \otimes u_V\rangle \le |V|^{-1} \Big[&\mathop{\sum_{V', V'' \in \textup{ch}(V)}}_{V' \ne V''}  |\langle T(a \otimes \chi_{V'}), \chi_{3K \setminus K} \otimes \chi_{V''} \rangle| \\
& + \sum_{V' \in \textup{ch}(V)} |\langle T(a \otimes \chi_{V'}), \chi_{3K \setminus K} \otimes \chi_{V'} \rangle|\Big],
\end{align*}
where
\begin{align*}
|\langle T(a \otimes \chi_{V'}), &\chi_{3K \setminus K} \otimes \chi_{V''} \rangle| \\
&\le \int_K \int_{3K \setminus K} \frac{1}{|x_1-y_1|^n}\,dx_1\,dy_1 \cdot \int_{V'} \int_{V''} \frac{1}{|x_2-y_2|^m}\,dx_2\,dy_2 \lesssim |K||V|
\end{align*}
and
\begin{align*} 
|\langle T(a \otimes \chi_{V'}), \chi_{3K \setminus K} \otimes \chi_{V'} \rangle|
&\le  C(\chi_{V'}, \chi_{V'}) \int_K \int_{3K \setminus K} \frac{1}{|x_1-y_1|^n}\,dx_1\,dy_1 \lesssim |K||V|.
\end{align*}
Furthermore, we have
\begin{align*}
|\langle T(a \otimes u_V), \chi_{(3K)^c} \otimes u_V\rangle| \le |V|^{-1} \Big[&\mathop{\sum_{V', V'' \in \textup{ch}(V)}}_{V' \ne V''} |\langle T(a \otimes \chi_{V'}), \chi_{(3K)^c} \otimes \chi_{V''} \rangle| \\
 & + \sum_{V' \in \textup{ch}(V)} |\langle T(a \otimes \chi_{V'}), \chi_{(3K)^c} \otimes \chi_{V'} \rangle|\Big],
\end{align*}
where
\begin{align*}
&|\langle T(a \otimes \chi_{V'}), \chi_{(3K)^c} \otimes \chi_{V''} \rangle|  \\
&\lesssim |K| \cdot \ell(K)^{\delta} \int_{(3K)^c} \frac{dx_1}{|x_1-c_K|^{n+\delta}} \cdot \int_{V'} \int_{V''}  \frac{1}{|x_2-y_2|^m}\,dx_2\,dy_2 \lesssim |K||V|
\end{align*}
and
\begin{align*}
|\langle T(a \otimes \chi_{V'}), \chi_{(3K)^c} \otimes \chi_{V'} \rangle|
&\le  C(\chi_{V'}, \chi_{V'})\int_K \int_{(3K)^c} \frac{\ell(K)^{\delta}}{|x_1-c_K|^{n+\delta}}\,dx_1\,dy_1 \lesssim |K||V|.
\end{align*}

For the first term we again begin with the estimate
\begin{displaymath}
|\langle T(a \otimes u_V), \chi_K \otimes u_V\rangle| \lesssim |V|^{-1} \sum_{V', V'' \in \textup{ch}(V)} |\langle T(a \otimes \chi_{V'}), \chi_K \otimes \chi_{V''}\rangle|.
\end{displaymath}
Let us consider the case $V' \ne V''$. In this case we have
\begin{align*}
|\langle T(a \otimes \chi_{V'}), \chi_K \otimes \chi_{V''}\rangle| &= \Big| \int_{V'} \int_{V''} K_{a, \chi_K}(x_2,y_2)\,dx_2\,dy_2\Big| \\
&\le C(a, \chi_K) \int_{V'} \int_{V''} \frac{1}{|x_2-y_2|^m}\,dx_2\,dy_2 \lesssim |K| |V|.
\end{align*}
Thus, we are only left with the need for the estimate $|\langle T(a \otimes \chi_{V'}), \chi_K \otimes \chi_{V'}\rangle| \lesssim |K||V|$ -- but this is one of the diagonal BMO assumptions.

\end{proof}
Because of this lemma, one can show, similarly but with a bit less effort than in Proposition \ref{singlepara}, that $S^{0000}$ is $L^2$ bounded.
\section{Inside/nearby}
This goes very much so in the same vein as the inside/equal case. In fact, this is easier since the nearby cubes do not intersect by definition.
From the series with the matrix element
$\langle T(h_{I_1} \otimes u_{J_1}), s_{I_1I_2} \otimes u_{J_2}\rangle$ we get
\begin{displaymath}
C\sum_{i_1 = 1}^{\infty} \sum_{j_1 = 1}^r \sum_{j_2 = 1}^{j_1} 2^{-i_1\delta/2}2^{-j_1\delta/2} \langle S^{i_10j_1j_2}f, g\rangle.
\end{displaymath}
From the series with the matrix element $\langle h_{I_2}\rangle_{I_1} \langle T(h_{I_1} \otimes u_{J_1}), 1 \otimes u_{J_2}\rangle$ we get
\begin{displaymath}
C \sum_{j_1 = 1}^r \sum_{j_2 = 1}^{j_1} 2^{-j_1\delta/2} \langle S^{00j_1j_2}f, g\rangle
\end{displaymath}
with bounded non-cancellative shifts.

\section{Equal/equal}
This part can be realized in the form $C\langle S^{0000}f, g\rangle$ for a cancellative shift, since one can just
estimate $|\langle T(h_K \otimes u_V), h_K \otimes u_V\rangle| \lesssim 1$. This estimate is an easy consequence of the weak boundedness property and the size estimates
of our kernels.
\section{Equal/nearby}
This part is clearly of the form
\begin{displaymath}
C  \sum_{j_1 = 1}^r \sum_{j_2=1}^{j_1} 2^{-j_1\delta/2} \langle S^{00j_1j_2}f,g\rangle,
\end{displaymath}
where the shifts are cancellative. Here one can again just use the estimate $|\langle T(h_K \otimes u_{J_1}), h_K \otimes u_{J_2}\rangle| \lesssim 1$, which follows
just from the size estimates of our kernels.
\section{Nearby/Nearby}
This part is of the form
\begin{displaymath}
C\sum_{i_1 = 1}^r \sum_{i_2=1}^{i_1}  \sum_{j_1 = 1}^r \sum_{j_2=1}^{j_1} 2^{-i_1\delta/2}2^{-j_1\delta/2} \langle S^{i_1i_2j_1j_2}f,g\rangle
\end{displaymath}
once again because of the easy estimate $|\langle T(h_{I_1} \otimes u_{J_1}), h_{I_2} \otimes u_{J_2}\rangle| \lesssim 1$. This follows from the size estimate for the full kernel.

\end{document}